\theoremstyle{plain}
\newtheorem{theorem}{\bf Theorem}[section]
\newtheorem{lemma}[theorem]{\bf Lemma}
\theoremstyle{definition}
\newtheorem{definition}[theorem]{\bf Definition}
\theoremstyle{remark}
\newtheorem*{remark*}{\bf Remark}
\numberwithin{equation}{section}
\newcommand{\N}{{\mathbb N}}
\newcommand{\Z}{{\mathbb Z}}
\newcommand{\R}{{\mathbb R}}
\newcommand{\C}{{\mathbb C}}
\newcommand{\rme}{{\rm e}}
\newcommand{\rmd}{{\rm d}}
\newcommand{\cG}{{\mathcal G}}
\newcommand{\cL}{{\mathcal L}}
\newcommand{\cM}{{\mathcal M}}
\newcommand{\cR}{{\mathcal R}}
\newcommand{\sig}{\sigma}
\DeclareMathOperator{\vol}{vol}
\DeclareMathOperator{\Dom}{Dom}
\DeclareMathOperator{\Spec}{Spec}
\DeclareMathOperator{\Ker}{Ker} 
\DeclareMathOperator{\Tr}{Tr} 
\renewcommand{\Re}{\hbox{{\rm Re}}\,}
\renewcommand{\Im}{\hbox{{\rm Im}}\,}
\DeclareMathOperator{\supp}{supp}
\newcommand{\hash}{\#}
\newcommand{\abs}[1]{{\lvert#1\rvert}}
\newcommand{\wt}\widetilde
\newcommand{\ess}{{\text{\rm ess}}}
\newcommand{\loc}{{\text{\rm loc}}}
\newcommand{\inte}{{\text{\rm int}}}
\newcommand{\exte}{{\text{\rm ext}}}
\newcommand{\even}{{\text{\rm even}}}
\newcommand{\odd}{{\text{\rm odd}}}
\newcommand{\length}{\rho}
\newcommand{\vf}{\varphi_v^\varkappa}
\title[Non-Weyl Resonance Asymptotics for Quantum Graphs]
{Non-Weyl Resonance Asymptotics\\ for Quantum Graphs}
\author[Davies]{E.~B.~Davies}
\address[Davies]{Department of Mathematics,
King's College London, Strand, London WC2R  2LS, U.K.}
\email{E.Brian.Davies@kcl.ac.uk}
\author[Pushnitski]{A.~Pushnitski}
\address[Pushnitski]{Department of Mathematics,
King's College London, Strand, London WC2R  2LS, U.K.}
\email{alexander.pushnitski@kcl.ac.uk}
\date{1 March 2010}
\begin{document}

\subjclass[2000]{Primary 34B45;  Secondary 35B34, 47E05}

\keywords{Quantum graph; resonance; Weyl asymptotics}

\begin{abstract}
We consider the resonances of a quantum graph $\cG$ that consists of
a compact part with one or more infinite leads attached to it. We
discuss the  leading term of the asymptotics of the number of
resonances of $\cG$ in a disc of a large radius. We call $\cG$  a
\emph{Weyl graph} if the  coefficient in front of this leading term
coincides with the volume of the compact part of $\cG$. We give an
explicit  topological criterion for a graph to be Weyl. In the final
section we analyze a particular example in some detail to explain
how the transition from the Weyl to the non-Weyl case occurs.
\end{abstract}

\maketitle

\section{Introduction}\label{intro}

\subsection{Quantum graphs}
Let $\cG_0$ be a finite compact metric graph. That is, $\cG_0$ has
finitely many edges and each edge is equipped with coordinates
(denoted $x$) that identify this edge with a bounded interval of the
real line. We choose some subset of  vertices of $\cG_0$, to be
called \emph{external vertices}, and attach one or more copies of
$[0,\infty)$, to be called \emph{leads}, to each external vertex;
the point $0$ in a lead is thus identified with the relevant
external vertex. We call the thus extended graph $\cG$. We assume
that $\cG$ has no ``tadpoles'', i.e.\ no edge starts and ends at the
same vertex; this can always be achieved by introducing additional
vertices, if necessary. In order to distinguish the edges of $\cG_0$
from the leads, we will call the former the \emph{internal edges} of
$\cG$.

In $L^2(\cG)$ we consider the self-adjoint operator
$H=-\frac{d^2}{dx^2}$ with the continuity condition and the
Kirchhoff boundary condition at each vertex of $\cG$; see
Section~\ref{sec.b} for the precise definitions. The metric graph
$\cG$ equipped with the self-adjoint operator $H$ in $L^2(\cG)$ is
called the \emph{quantum graph}. We refer to the surveys
\cite{kuch1,kuch} for a general exposition of quantum graph theory.

If the set of leads is non-empty, it  is easy to show by standard
techniques (see e.g.\ \cite[Lemma 1]{Ong}) that the spectrum of $H$
is $[0,\infty)$. The operator $H$ may have embedded eigenvalues.

\subsection{Resonances of $H$}
The ``classical'' definition of resonances is
\begin{definition}\label{def0}
We will say that $k\in\C$, $k\not=0$, is a \emph{resonance} of $H$
(or, by a slight abuse of terminology, a resonance of $\cG$) if
there exists a \emph{resonance eigenfunction} $f\in L_\loc^2(\cG)$,
$f\not\equiv0$, which satisfies the equation
\begin{equation}
-f''(x)=k^2 f(x), \quad x\in \cG, \label{a0a}
\end{equation}
on each edge and lead of $\cG$, is continuous on $\cG$, satisfies
the Kirchhoff's boundary condition at each vertex of $\cG$ and the
\emph{radiation condition} $f(x)=f(0)e^{ikx}$ on each lead of $\cG$.
We denote the set of all resonances of $H$ by $\cR$.
\end{definition}

Any solution to \eqref{a0a} on a lead $\ell=[0,\infty)$ satisfies
$f(x)=\gamma_\ell e^{ikx}+\gamma'_\ell e^{-ikx}$; the above
definition  requires that there exists a non-zero solution with all
coefficients $\gamma'_\ell$ vanishing. It is easy to see that all
resonances must satisfy $\Im k\leq 0$; 
indeed, if $k_0$ with $\Im k_0>0$ is a resonance then the 
corresponding resonance eigenfunction is in $L^2(\cG)$, so 
$k_0^2$ is an eigenvalue of $H$, which is impossible
since $k_0^2\notin[0,\infty)$.
As we will only be
interested in the asymptotics of the number of resonances in large
disks, we exclude the case $k=0$ from further consideration. In the
absence of leads, the spectrum of $H$ consists of non-negative
eigenvalues and $k\not=0$ is a resonance if and only if $k\in\R$ and
$k^2$ is an eigenvalue of $H$.

It is well known (see e.g.\ \cite{EL1,EL2}) that the above
``classical'' definition of a resonance coincides with the
definition via exterior complex scaling (see \cite{AC,Simon,SZ}). In
the complex scaling approach, the resonances of $H$ are identified
with the eigenvalues of an auxiliary non-selfadjoint operator
$H(i\theta)$, $\theta\in(0,\pi)$. The \emph{algebraic multiplicity}
of a resonance is then defined as the algebraic multiplicity of the
corresponding eigenvalue of $H(i\theta)$. We discuss this in more
detail in Section~\ref{sec.b}, where we show that the multiplicity is independent of $\theta$. In particular, we show (in
Theorem~\ref{thm.b3}) that any $k\in\R$, $k\not=0$, is a resonance
if and only if $k^2$ is an eigenvalue of $H$ and in this case the
corresponding multiplicities coincide.

We define the \emph{resonance counting function} by
$$
N(R)=\hash\{k: k\in\cR,\quad \abs{k}\leq R\}, \quad R>0,
$$
with the convention that each resonance is counted with its
algebraic multiplicity taken into account. Note that the set $\cR$
of resonances is invariant under the symmetry $k\to -\overline{k}$,
so this method of counting yields, roughly speaking, twice as many
resonances as one would obtain if one imposed an additional
condition $\Re (k)\geq 0$. In particular, in the absence of leads,
$N(R)$ equals twice the number of eigenvalues $\lambda\not=0$ of $H$
(counting multiplicities) with $\lambda\leq R^2$.

\subsection{Main result}

This paper is concerned with the asymptotics of the resonance
counting function $N(R)$ as $R\to\infty$. We say that $\cG$ is a
\emph{Weyl graph}, if
\begin{equation}
N(R)=\frac2\pi \vol(\cG_0) R+o(R), \quad \mbox{ as $R\to\infty$,}
\label{weyl}
\end{equation}
where $\vol (\cG_0)$ is the sum of the lengths of the edges of
$\cG_0$. If  there are no leads then $H$ has pure point spectrum,
resonances are identified with eigenvalues of $H$ and Weyl's law
\eqref{weyl} may be proved by Dirichlet-Neumann bracketing. Thus,
every compact quantum graph is Weyl in our sense. As we show below,
in the presence of leads this may not be the case.

We call an external vertex $v$ of $\cG$ \emph{balanced} if the
number of leads attached to $v$ equals the number of internal edges
attached to $v$. If  $v$ is not balanced, we call it
\emph{unbalanced}. Our main result is

\begin{theorem}\label{th1}
One has
\begin{equation}
N(R)=\frac{2}{\pi}WR+O(1), \quad \mbox{as $R\to\infty$,} \label{a4}
\end{equation}
where the coefficient $W$ satisfies $0\leq W\leq \vol(\cG_0)$. One
has $W=\vol(\cG_0)$ if and only if every external vertex of $\cG$ is
unbalanced.
\end{theorem}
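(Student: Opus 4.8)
The plan is to reduce the resonance condition to the vanishing of a single secular determinant $F(k)$, to realise $F$ as an exponential polynomial in $k$, and then to read off the leading asymptotics of its zero set from the spread of the exponents that occur with a non-zero coefficient. First I would write the eigenfunction on each internal edge $e$ of length $\ell_e$, joining vertices $v_1(e)$ and $v_2(e)$, as $a_e e^{ikx}+b_e e^{-ikx}$, and on each lead as $\gamma e^{ikx}$ as forced by the radiation condition. Using continuity to express each lead amplitude $\gamma$ through the value of the eigenfunction at the attaching vertex, I would eliminate the lead variables; this replaces the Kirchhoff condition at an external vertex $v$ by the effective condition $\sum_{e\ni v}\partial_n f_e(v)+ikq_v f(v)=0$, where $q_v$ is the number of leads at $v$. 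Since the block of the coefficient matrix carrying the lead amplitudes is invertible, this reduction preserves the zero set, and the resonances become exactly the zeros of the determinant $F(k)$ of the resulting $2E\times 2E$ system in the amplitudes $\{a_e,b_e\}$, where $E$ is the number of internal edges. With $t_e:=e^{ik\ell_e}$ the matrix entries are of the form $\mathrm{const}$, $\pm ik$, $\pm t_e$ or $\pm ikt_e^{\pm1}$, so
\[
F(k)=\sum_{m}P_m(k)\,e^{ik\mu_m},
\]
a finite sum in which every exponent has the form $\mu_m=\sum_e\e_e\ell_e$ with $\e_e\in\{-1,0,1\}$ and each $P_m$ is a polynomial.

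The counting of zeros of such an exponential polynomial is governed by the two extreme exponents $\mu_{\max}$ and $\mu_{\min}$ among those with $P_m\not\equiv0$. I would prove, by the argument principle applied to $F(k)e^{-ik(\mu_{\max}+\mu_{\min})/2}$ along large semicircles and by estimating the winding from the single dominant exponential in each half-plane, the zero-counting lemma
\[
N(R)=\frac{\mu_{\max}-\mu_{\min}}{\pi}\,R+O(1),\qquad R\to\infty.
\]
The symmetry $k\mapsto-\overline{k}$ of $\cR$ forces the set of exponents occurring in $F$ to be symmetric about the origin, so $\mu_{\min}=-\mu_{\max}$ and hence $W=\mu_{\max}$. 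Since $\abs{\mu_m}\le\sum_e\ell_e=\vol(\cG_0)$ for every $m$, this already yields $0\le W\le\vol(\cG_0)$, and reduces the dichotomy to deciding when the top exponent $\vol(\cG_0)$ actually occurs, that is, when the coefficient of $\prod_e t_e$ in $F$ is not identically zero.

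Finally I would compute this leading coefficient. Inspecting where each $t_e$ can appear shows that the total degree of $F$ in $t_e$ is at most $1$, and that the monomial $\prod_e t_e$ is produced only by those permutations which match, at every vertex $v$, the $d_v$ amplitude-columns attached to the edges at $v$ (namely $a_e$ when $v_2(e)=v$ and $b_e$ when $v_1(e)=v$) to the $d_v$ rows located at $v$, where $d_v$ is the number of internal edges at $v$. Consequently the coefficient factorises over vertices as $\prod_v\det L_v$, where $L_v$ is an explicit $d_v\times d_v$ local matrix assembled from the $(d_v-1)$ continuity rows and the effective Kirchhoff row at $v$. A short column reduction (adding all columns to the first collapses the continuity rows to zero and the Kirchhoff row to $ik(q_v-d_v)$) evaluates it as $\det L_v=ik\,(q_v-d_v)$, so the coefficient of $e^{ik\vol(\cG_0)}$ equals $(ik)^{\abs{V_0}}\prod_v(q_v-d_v)$, with $V_0$ the vertex set of $\cG_0$. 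At an internal vertex $q_v=0$ and $d_v\ge1$, so its factor never vanishes; at an external vertex the factor vanishes precisely when $q_v=d_v$, i.e.\ exactly when $v$ is balanced. Hence the leading coefficient is not identically zero if and only if every external vertex is unbalanced, which by the previous paragraph is precisely the condition $W=\vol(\cG_0)$.

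I expect the main obstacle to lie in the zero-counting lemma for exponential polynomials rather than in the combinatorics: one must control the clustering of zeros near the real axis and confirm that the $O(1)$ error is genuine, and one must verify that the algebraic multiplicities defined through complex scaling coincide with the orders of the zeros of $F$ as an analytic function. By contrast, the vertex-by-vertex evaluation of $\det L_v$ and the resulting factorisation of the leading coefficient are routine linear algebra once the degree bound on $t_e$ is in place.
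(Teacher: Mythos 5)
Your overall strategy coincides with the paper's: realise the resonance condition as the vanishing of an exponential polynomial $F$, invoke a zero-counting theorem giving $N(R)=\frac{\mu_{\max}-\mu_{\min}}{\pi}R+O(1)$, bound the exponents by $\pm\vol(\cG_0)$, and decide when the extreme coefficients vanish by local linear algebra at the vertices (your one-shot factorisation $\prod_v\det L_v$ of the top coefficient is a cleaner packaging of the paper's vertex-by-vertex induction, and your local factor $ik(q_v-d_v)$ matches the paper's $(q-p)$). But there is a genuine gap in the middle step. The claim that the symmetry $k\mapsto-\overline{k}$ of $\cR$ forces the exponent set of $F$ to be symmetric about the origin, hence $\mu_{\min}=-\mu_{\max}$ and $W=\mu_{\max}$, is false: a symmetry of the \emph{zero set} can never constrain the exponent set beyond its width $\mu_{\max}-\mu_{\min}$, because $F$ and $e^{ik\sigma_0}F$ have identical zeros but translated exponents. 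Concretely, for the circle with both leads attached at the same point (the paper's Section 5 example with $c=1$) the secular determinant is a nonzero multiple of $1-e^{-2\pi ik}$, whose zero set $\Z$ is symmetric under $k\mapsto-\overline{k}$ while its exponents are $\{0,-2\pi\}$; here $\mu_{\max}=0\neq 2\pi=-\mu_{\min}$. In general $W=(\mu_{\max}-\mu_{\min})/2$, and in the balanced case one has $\mu_{\max}<\vol(\cG_0)$ but still $\mu_{\min}=-\vol(\cG_0)$, so the identification $W=\mu_{\max}$ is quantitatively wrong and, more importantly, the ``unbalanced implies Weyl'' direction is incomplete: nonvanishing of the coefficient of $e^{ik\vol(\cG_0)}$ gives $\mu_{\max}=\vol(\cG_0)$, but you still need $\mu_{\min}=-\vol(\cG_0)$. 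The fix lives inside your own framework: run the identical vertex factorisation on the coefficient of $e^{-ik\vol(\cG_0)}$ (now each $b_e$ column contributes its $t_e^{-1}$ entry and each $a_e$ column a constant); the local determinants become $\pm ik(q_v+d_v)$, which never vanish, so $\mu_{\min}=-\vol(\cG_0)$ unconditionally. This is exactly what the paper proves as its ``Item 2''.

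Two further points. First, the multiplicity issue you flag at the end is not a side technicality: matching the algebraic multiplicity of a resonance, defined via the complex-scaled operator $H(i\theta)$, with the order of the zero of the secular determinant is where the bulk of the paper's work lies (a resolvent identity through the Dirichlet-to-Neumann map and a trace formula), and without it $N(R)$ is not the counting function the theorem asserts. Second, you need not reprove the zero-counting lemma by contour integration: after dividing each Kirchhoff row by $ik$ the coefficients $P_m$ become constants, and the statement with a genuine $O(1)$ error and the confinement of zeros to a horizontal strip is classical (Langer's theorem, which the paper cites). Neither of these is a wrong step on your part, but both are essential and currently unaddressed.
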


This theorem shows, in particular, that as the graph becomes larger
and more complex the failure of Weyl's law becomes increasingly
likely in an obvious sense.

\subsection{Discussion}
The simplest example of a graph $\cG$ with a balanced external
vertex occurs when exactly one lead $\ell$ and exactly one internal
edge $e$ meet at a vertex. In this case, one can merge $e$ and
$\ell$ into a new lead; this will not affect the resonances of $\cG$
but will reduce $\vol \cG_0$. This already shows that $\cG$ cannot
be Weyl. Section~\ref{sec.f} discusses the second most simple
example.

Our proof of Theorem~\ref{th1} consists of two steps. The first step
is to identify the set $\cR$  of resonances with the set of zeros of
$\det A(k)$, where $A(k)$ is a certain analytic matrix-valued
function. This identification is straightforward, but it has a
subtle aspect: this is to show  that the algebraic multiplicity of a
resonance coincides with the order of the zero of $\det A(k)$. This
is done in Sections~\ref{sec.c}--\ref{sec.d} by employing a range of
rather standard techniques of spectral theory, including a resolvent
identity which involves the Dirichlet-to-Neumann map.

The function $\det A(k)$ turns out to be an exponential polynomial.
By a classical result (Theorem~\ref{langer}), the asymptotics of the
zeros of an exponential polynomial can be explicitly expressed in
terms of the coefficients of this polynomial. Thus, the second step
of our proof is a direct and completely elementary analysis of the
matrix $A(k)$ which allows us to relate the required information
about the coefficients of the polynomial $\det A(k)$ to the question
of whether the external vertices of $\cG$ are balanced. This is done
in Section~\ref{sec.e}.

Resonance asymptotics of Weyl type have been established for
compactly supported potentials on the real line, a class of
super-exponentially decaying potentials on the real line, compactly
supported potentials on cylinders and Laplace operators on surfaces
with finite volume hyperbolic cusps in \cite{zwor,froe,chri,parn}
respectively. The proofs rely upon theorems about the zeros of
certain classes of entire functions. Likewise, our analysis uses a
simple classical result (Theorem~\ref{langer}) about zeros of
exponential polynomials.

The situation with resonance asymptotics for potential and obstacle
scattering in Euclidean space in dimensions greater than one and in
hyperbolic space is more complicated and still not fully understood;
the current state of knowledge is described in \cite{stef,borth}.
Here we remark only that generically, the resonance asymptotics in
the multi-dimensional case is not given by the Weyl formula. We hope
that Theorem~\ref{th1} can provide some insight to the
multi-dimensional case.

Resonances for quantum graphs have been discussed in a recent
publication \cite{EL2}. Our paper has very little technical content
in common with \cite{EL2}, in spite of their common theme.

\subsection{Example}
In Section~\ref{sec.f} we consider the resonances of a particularly
simple quantum graph which can be described as a circle with two
leads attached to it. Theorem~\ref{th1} says that if  the leads are
attached at different points on the circle, the corresponding
quantum graph is Weyl, and if they are attached at the same point,
we have a non-Weyl graph. When the two points where the leads are
attached move closer to each other and eventually coalesce, one
observes the transition from the Weyl to the non-Weyl case. We study
this transition in much detail. We show that as the two external
vertices get closer, ``half'' of the resonances move off to
infinity. In the course of this analysis, we also obtain bounds on
the positions of individual resonances for this model.

The same example was recently considered by Exner and Lipovsky
\cite{EL2} subject to general boundary conditions that include the
Kirchhoff's boundary condition case as a singular limit. Although
some of their results are broadly similar to ours, none of our
theorems  may be found in \cite{EL2}.

\section{Resonances via complex scaling}\label{sec.b}

Here we introduce the necessary notation, recall the definition of
resonances via the complex scaling procedure and show that  the
resonances on the real axis coincide with the eigenvalues of $H$.

\subsection{Notation}\label{notation}
Let $E^{\inte}$ be the set of all internal edges of $\cG$ (i.e.\ the
set of all edges of $\cG_0$) and let $E^{\exte}$ be the set of all
leads; we also denote $E=E^\inte\cup E^\exte$. The term ``edge''
without an adjective will refer to any element of $E$. For $e\in
E^\inte$, we denote by $\length(e)$ the length of $e$; i.e.\ an edge
$e\in E^\inte$ is identified with the interval $[0,\length(e)]$.

Let $V$ be the set of all vertices of $\cG$, let $V^\exte$ be the
set of all external vertices, and let $V^\inte=V\setminus V^\exte$;
the elements of $V^\inte$ will be called \emph{internal vertices}.
The degree of a vertex $v$ is denoted by $d(v)$. The number of leads
attached to an external vertex $v$ is denoted by $q(v)$; we also set
$q(v)=0$ for $v\in V^\inte$.

If an edge or a lead $e$ is attached to a vertex $v$, we write $v\in
e$. If two vertices $u,v$ are connected by one or more edges, we
write $u\sim v$.

We denote by $\cG_\infty$ the graph $\cG$ with all the internal
edges and vertices removed. We let $\chi_0$ and $\chi_\infty$ be the
characteristic functions of $\cG_0$ and $\cG_\infty$.

Let $f:\cG\to\C$ be a function such that the restriction of $f$ onto
every edge is continuously differentiable. Then for $v\in V$, we
denote by $N_v f$ the sum of the outgoing derivatives of $f$ at $v$
over all edges attached to $v$. If $v$ is an external vertex, we
denote by $N_v^\inte f$ (resp. $N_v^\exte f$) the sum of all
outgoing derivatives of $f$ at $v$ over all internal edges (resp.
leads) attached to $v$.

Let $\wt C(\cG)$ be the class of functions $f:\cG\to\C$ which are
continuous on $\cG\setminus V^\exte$ and such that for each external
vertex $v$ the function $f(x)$ approaches a limiting value (to be
denoted by $D_v^\inte f$) as $x$ approaches $v$ along any internal
edge and $f(x)$ approaches another limiting value (to be denoted by
$D_v^\exte f$) as $x$ approaches $v$ along any lead.

For any finite set $A$, we denote by $\abs{A}$ the number of
elements of $A$. We will use the identity
\begin{equation}
\sum_{v\in V} d(v)=2\abs{E^\inte}+\abs{E^\exte}. \label{b1}
\end{equation}
Finally, we use the notation $\C_+=\{z\in\C:  \Im z>0\}$.

\subsection{The operator $H(\varkappa)$\label{sec2.2}}
The domain of the self-adjoint operator $H$ consists of all
continuous functions $f:\cG\to\C$ such that the restriction of $f$
onto any $e\in E$ lies in the Sobolev space $W^2_2(e)$, and $f$
satisfies the Kirchhoff boundary condition $N_v f=0$ on every vertex
$v$ of $\cG$.

For $\varkappa\in\R$, let $U(\varkappa):L^2(\cG)\to L^2(\cG)$ be the
unitary operator which acts as identity on $L^2(\cG_0)$ and as a
dilation on all leads $\ell=[0,\infty)$:
\begin{equation}
(U(\varkappa)f)(x)=e^{\varkappa/2}f(e^\varkappa x), \quad x\in\ell.
\label{a1}
\end{equation}
Note that we have $U(\varkappa)^*=U(-\varkappa)$ for any
$\varkappa\in\R$. Consider the operator
\begin{equation}
H(\varkappa)=U(\varkappa)HU(-\varkappa). \label{a2}
\end{equation}
This operator admits an analytic continuation to $\varkappa\in\C$,
which we describe below.

\begin{definition}
For $\varkappa\in\C$, the operator $H(\varkappa)$ in $L^2(\cG)$ acts
according to the formula
\begin{equation}
(H(\varkappa)f)(x) =
\begin{cases}
-f''(x), &\text{if $x\in\cG_0$},
\\
-e^{-2\varkappa}f''(x), &\text{if $x\in\cG_\infty$.}
\end{cases}
\label{b2}
\end{equation}
The domain of $H(\varkappa)$ is defined to be the set of all
$f:\cG\to\C$ which satisfy the following conditions:

(i) the restriction of $f$ onto any $e\in E$ lies in the Sobolev
space $W^2_2(e)$;

(ii) $f\in\wt C(\cG)$;

(iii) $f$ satisfies the condition $N_v f=0$  at every internal
vertex $v$;

(iv) For any $v\in V^\exte$, one has
\begin{gather}
D_v^\inte f-e^{-\varkappa/2}D_v^\exte f=0; \label{b3}
\\
N_v^\inte f+ e^{-3\varkappa/2} N_v^\exte f=0. \label{b4}
\end{gather}
\end{definition}
In particular, $H(0)$ is the operator called $H$ so far. For complex
$\varkappa$, the operator $H(\varkappa)$ is in general
non-selfadjoint. A standard straightforward computation shows that
for any $\varkappa\in\C$ the operator $H(\varkappa)$ is closed and
\begin{equation}
H(\varkappa)^*=H(\overline{\varkappa}). \label{b5}
\end{equation}

\subsection{Resonances via complex scaling}

The following theorem is standard in the method of complex scaling;
see \cite{AC,Simon,SZ,EL1}:
\begin{theorem}\label{thm.b2}
The family of operators $H(\varkappa)$, $\varkappa\in\C$, is
analytic in the sense of Kato (see e.g.\ \cite[Section
XII.2]{reedsimon}) and the identity
\begin{equation}
H(\varkappa+\varkappa_0)=U(\varkappa_0)H(\varkappa)U(-\varkappa_0),
\quad \forall \varkappa\in\C, \  \forall \varkappa_0\in\R \label{a3}
\end{equation}
holds. The essential spectrum of $H(\varkappa)$ coincides with the
half-line $e^{-2\varkappa}[0,\infty)$. Let $\theta\in(0,\pi)$; then
the sector  $0<\arg\lambda<2\pi-2\theta$, $\lambda\not=0$, contains
no eigenvalues of $H(i\theta)$, and any $\lambda\not=0$ in the
sector $2\pi-2\theta<\arg\lambda\leq2\pi$ is an eigenvalue of
$H(i\theta)$ if and only if $\lambda=k^2$ with $k\in\cR$.
\end{theorem}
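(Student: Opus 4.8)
The plan is to treat the assertions in the order stated. For real $\varkappa,\varkappa_0$ the identity \eqref{a3} is a direct computation: conjugating the differential expression \eqref{b2} and the vertex conditions \eqref{b3}--\eqref{b4} by the dilation $U(\varkappa_0)$ from \eqref{a1} reproduces, term by term, the definition of $H(\varkappa+\varkappa_0)$. Once analyticity is available, both sides of \eqref{a3} are analytic in $\varkappa$ and agree on $\R$, hence on all of $\C$. The main obstacle is the Kato analyticity itself, because the domain of $H(\varkappa)$ genuinely depends on $\varkappa$ through \eqref{b3}--\eqref{b4}, so the family is not of type (A) as written. I would remove this dependence in one of two equivalent ways. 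Either I conjugate by an analytic, boundedly invertible transformation $W(\varkappa)$ that is the identity on $L^2(\cG_0)$ and rescales each lead so that \eqref{b3}--\eqref{b4} become $\varkappa$-independent; then $W(\varkappa)H(\varkappa)W(\varkappa)^{-1}$ has a fixed domain and coefficients depending analytically on $\varkappa$, i.e.\ it is of type (A), and analyticity of the original family follows. Or, more concretely, I fix $\lambda$ off the essential spectrum and construct $(H(\varkappa)-\lambda)^{-1}$ explicitly by gluing the edgewise free resolvents through the finitely many conditions (iii)--(iv); solving the resulting finite linear system exhibits the resolvent as analytic in $\varkappa$. The adjoint relation \eqref{b5} is a parallel routine computation.

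For the essential spectrum I would decouple the graph at its external vertices. Imposing separated boundary conditions there changes $H(\varkappa)$ by a finite-rank perturbation in the resolvent sense, so by Weyl's theorem the essential spectrum equals that of the decoupled operator. The latter is the orthogonal sum of an operator on the compact part $\cG_0$, which has compact resolvent and hence only discrete spectrum, and the lead operators $-e^{-2\varkappa}d^2/dx^2$ on $[0,\infty)$, each with spectrum $e^{-2\varkappa}[0,\infty)$. This gives the essential spectrum $e^{-2\varkappa}[0,\infty)$ claimed.

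Finally I would match eigenvalues with resonances by unscaling. Fix $\theta\in(0,\pi)$ and let $\lambda=k^2\neq0$ lie off the ray $e^{-2i\theta}[0,\infty)$. If $g\in\Dom(H(i\theta))$ satisfies $H(i\theta)g=\lambda g$, then on each lead $g''=-e^{2i\theta}\lambda g$, so $g$ is a multiple of $e^{i\mu x}$ with $\mu=e^{i\theta}k$, and square-integrability forces $\Im\mu>0$. Putting $f=U(-i\theta)g$, that is $f(x)=e^{-i\theta/2}g(e^{-i\theta}x)$, leaves $f=g$ on $\cG_0$ and turns the lead behaviour into the outgoing wave $f(x)=f(0)e^{ikx}$, while the conditions \eqref{b3}--\eqref{b4} become exactly continuity and the Kirchhoff conditions; thus $f$ is a resonance eigenfunction and $k\in\cR$. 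Since every resonance satisfies $\Im k\leq0$ (as shown after Definition~\ref{def0}), we get $\arg k\in(-\theta,0]$, equivalently $2\pi-2\theta<\arg\lambda\leq2\pi$; in particular $H(i\theta)$ has no eigenvalues in the sector $0<\arg\lambda<2\pi-2\theta$. The converse is the same computation read backwards: given $k\in\cR$ with $\arg\lambda\in(2\pi-2\theta,2\pi]$, applying $U(i\theta)$ to the resonance eigenfunction yields a square-integrable function — this is precisely what the sector condition guarantees — which lies in $\Dom(H(i\theta))$ with eigenvalue $\lambda$. As the map $g\mapsto U(-i\theta)g$ is a linear bijection between the relevant solution spaces, it also matches multiplicities, completing the proof.
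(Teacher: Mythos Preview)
Your proposal is correct and follows essentially the same route as the paper. The only noteworthy difference is in how you establish Kato analyticity and identify the essential spectrum: the paper handles both at once by invoking the resolvent identity of Theorem~\ref{thm.c1}, which exhibits $R^\varkappa(k)-R_D^\varkappa(k)$ as an explicit finite-rank operator built from the analytic functions $\varphi_v^\varkappa$, so Weyl's theorem and analyticity of $H_D(\varkappa)$ do the rest. Your second option (explicit resolvent construction by solving the finite linear system at the vertices) is precisely this argument in disguise, while your first option (conjugating by an analytic $W(\varkappa)$ to obtain a type (A) family) is a legitimate alternative the paper does not pursue. The remaining parts---the direct verification of \eqref{a3}, and the formal unscaling $f\leftrightarrow f_\theta=U(i\theta)f$ to pass between resonance eigenfunctions and eigenfunctions of $H(i\theta)$---match the paper's steps 3--5 exactly.
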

For completeness, we give the proof in Section~\ref{sec.d}.

As  $\theta\in(0,\pi)$ increases monotonically,  the essential
spectrum $e^{-2i\theta}[0,\infty)$  of $H(i\theta)$ rotates
clockwise, uncovering more and more  eigenvalues $\lambda$. These
eigenvalues are identified with the resonances $k$ of $H$ via
$\lambda=k^2$. If $\lambda\not=0$ is an eigenvalue of $H(i\theta)$,
$\theta\in(0,\pi)$, $2\pi-2\theta<\arg\lambda\leq 2\pi$, Kato's
theory of analytic perturbations implies that the eigenvalue and
associated Riesz spectral projection depend analytically on
$\theta$. Noting (\ref{a3}) and using analytic continuation it
follows that the algebraic multiplicity of $\lambda$ is independent
of $\theta$. It is easy to see directly that the geometric
multiplicity of $\lambda$ is also independent of $\theta$. The
\emph{algebraic  (resp. geometric) multiplicity} of a resonance $k$
is defined as the algebraic (resp. geometric) multiplicity of the
corresponding eigenvalue $\lambda=k^2$ of $H(i\theta)$.

\subsection{Resonances on the real line}

The geometric multiplicities of resonances will not play any role in
our analysis. However, we note that for the Schr\"odinger operator
on the real line, resonances can have arbitrary large algebraic
multiplicity \cite{Kor}, while their geometric multiplicity is
always equal to one. This gives an example of resonances with
distinct algebraic and geometric multiplicities. It would be
interesting to see if one can have distinct algebraic and geometric
multiplicities of resonances for quantum graphs in the situation we
are discussing. We have nothing to say about this except for the
case of the resonances on the real line:

\begin{theorem}\label{thm.b3}
\begin{enumerate}[\rm (i)]
\item
If $k\in\R$, $k\not=0$, is a resonance of $H$ then the algebraic and
geometric multiplicities of $k$ coincide.
\item
Any $k\in\R$, $k\not=0$, is a resonance of $H$ if and only if $k^2$
is an eigenvalue of $H$ and the multiplicity of the resonance $k$
coincides with the multiplicity of the eigenvalue $k^2$.
\end{enumerate}
\end{theorem}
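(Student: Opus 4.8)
The plan is to prove both statements by exploiting the symmetry $H(\varkappa)^* = H(\overline{\varkappa})$ together with Theorem~\ref{thm.b2}, which identifies real resonances $k$ with the eigenvalue $\lambda = k^2$ of $H(i\theta)$ sitting on the boundary ray $\arg\lambda = 2\pi$ of the allowed sector (i.e.\ on the positive real axis $(0,\infty)$, which is the edge of the region uncovered by rotating the essential spectrum). The key observation is that for $\varkappa = i\theta$ purely imaginary, we have $\overline{\varkappa} = -i\theta$, so $H(i\theta)^* = H(-i\theta)$; and by \eqref{a3} the operators $H(-i\theta)$ and $H(i\theta)$ are related by a unitary dilation only for real shifts, so I must instead compare $H(i\theta)$ and $H(-i\theta)$ directly. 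For a real point $\lambda = k^2 > 0$ lying on the essential spectrum of the self-adjoint operator $H = H(0)$, the resonance condition places $\lambda$ precisely on the common boundary of the spectra of $H(i\theta)$ and $H(-i\theta)$, and these two operators are adjoints of one another.

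First I would set $\lambda = k^2 > 0$ for a real resonance $k$ and use Theorem~\ref{thm.b2} to realize $\lambda$ as an eigenvalue of $H(i\theta)$ for every $\theta \in (0,\pi)$ (since $\arg\lambda = 2\pi$ lies in the closed sector $2\pi - 2\theta < \arg\lambda \le 2\pi$ for all such $\theta$). The algebraic multiplicity is the rank of the Riesz projection $P(\theta) = \frac{1}{2\pi i}\oint_\Gamma (z - H(i\theta))^{-1}\,dz$ for a small contour $\Gamma$ around $\lambda$, and the geometric multiplicity is $\dim\Ker(H(i\theta) - \lambda)$. Taking adjoints and using \eqref{b5}, the Riesz projection for the adjoint $H(i\theta)^* = H(-i\theta)$ at the conjugate point $\overline{\lambda} = \lambda$ is $P(\theta)^*$, so $H(i\theta)$ and $H(-i\theta)$ have equal algebraic multiplicities at $\lambda$, and the ranges of $P(\theta)$ and $P(-\theta)$ are complex-conjugate subspaces. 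The crucial step is to show that the generalized eigenspace for $H(i\theta)$ actually consists of genuine eigenfunctions, i.e.\ $\Ran P(\theta) = \Ker(H(i\theta) - \lambda)$, which forces algebraic and geometric multiplicities to coincide and proves part (i).

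To establish that coincidence I would argue that a genuine Jordan chain is obstructed by self-adjointness of $H$ at the real point $\lambda$. Concretely, suppose $f$ is a generalized eigenfunction with $(H(i\theta) - \lambda)f = g \ne 0$ where $(H(i\theta)-\lambda)g = 0$. Scaling back via $U(-i\theta)$ (understood through the analytic-continuation identity used to define resonances) relates $f$ and $g$ to $L^2_\loc$ solutions of the resonance equation \eqref{a0a} on $\cG$ satisfying the radiation condition, and pairing the rank-two Jordan relation against the corresponding solution of the adjoint (conjugate) problem produces a Green's-type identity. The boundary terms from integration by parts on the finite part $\cG_0$ cancel by the Kirchhoff conditions, while the lead contributions, evaluated using the explicit radiating form $f(0)e^{ikx}$ with real $k$, yield a nonzero quadratic expression that contradicts the existence of a nontrivial chain. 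I expect this integration-by-parts/Green's-identity computation on the leads — keeping careful track of which boundary conditions \eqref{b3}--\eqref{b4} are in force under complex scaling and showing the lead boundary terms do not vanish — to be the main obstacle, since it is exactly where the reality of $k$ must be used decisively.

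For part (ii), the ``only if'' direction and the multiplicity statement follow once part (i) is in hand: Theorem~\ref{thm.b2} already shows a real $k \ne 0$ is a resonance iff $\lambda = k^2$ is an eigenvalue of $H(i\theta)$, and by part (i) its (algebraic $=$ geometric) multiplicity equals $\dim\Ker(H(i\theta) - \lambda)$. It then remains to identify this kernel dimension with the multiplicity of $\lambda = k^2$ as an eigenvalue of the self-adjoint operator $H = H(0)$. I would do this by unscaling: an eigenfunction of $H(i\theta)$ at $\lambda > 0$, when transported back via the complex dilation, is precisely a resonance eigenfunction in the sense of Definition~\ref{def0}; for real $k$ the radiation condition $f(x) = f(0)e^{ikx}$ combined with $\Im k = 0$ forces the lead components into $L^2$ only when $f(0) = 0$ on every lead, in which case $f$ is supported on $\cG_0$ with Dirichlet-type data and is a genuine $L^2(\cG)$ eigenfunction of $H$ — establishing a dimension-preserving bijection between the two eigenspaces and completing the proof.
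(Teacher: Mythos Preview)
Your overall strategy --- rule out Jordan chains by an integration-by-parts identity, then identify $\Ker(H(i\theta)-\lambda)$ with $\Ker(H-\lambda)$ --- matches the paper's, but the execution has a genuine gap and one step is set up in the wrong direction.

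The gap is the ``scaling back via $U(-i\theta)$.'' For non-real $\theta$ there is no bounded operator $U(i\theta)$; the family $H(\varkappa)$ is obtained by analytic continuation in $\varkappa$, not by conjugating with an honest unitary. So you cannot rigorously transport $H(i\theta)$-eigenfunctions to resonance eigenfunctions with lead form $f(0)e^{ikx}$ and then integrate by parts on the leads: for real $k$ these are not in $L^2$, and any boundary term at infinity is oscillatory rather than vanishing. Relatedly, your claim that boundary terms on $\cG_0$ ``cancel by the Kirchhoff conditions'' is false at external vertices, since Kirchhoff couples internal and lead derivatives. And in part (ii) you assert that the lead form $f(0)e^{ikx}$ with real $k$ ``forces $f(0)=0$''; that is precisely the statement to be proved and does not follow from the radiation condition alone.

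The paper avoids all of this by never unscaling. One works directly with $f\in\Ker(H(i\theta)-\lambda)$, whose lead components are $f(0)\exp(ike^{i\theta}x)$ (genuinely $L^2$), and computes the manifestly real quantity
\[
\omega(f)=\int_{\cG_0}|f'|^2\,dx-\lambda\int_{\cG_0}|f|^2\,dx
\]
by parts on $\cG_0$ only. The boundary terms at external vertices, rewritten via \eqref{b3}--\eqref{b4} and the explicit lead form, equal $ik\sum_{v\in V^\exte}q(v)\,|D_v^\exte f|^2$, which is purely imaginary; hence $D_v^\exte f=0$ and $f$ vanishes on every lead. Once this is known, $f$ literally lies in $\Dom H$ with $Hf=\lambda f$, which gives the kernel identification for (ii); and in the Jordan-chain argument for (i) the boundary terms \emph{vanish} (because the eigenfunction $g$ vanishes on the leads), rather than being ``a nonzero quadratic expression'' as you anticipated. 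This vanishing-on-leads lemma is the missing idea in your proposal.
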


\begin{proof}
1. Let $\lambda>0$ be an eigenvalue of $H$ with the eigenfunction
$f$. If $\ell=[0,\infty)$ is a lead, then $f(x)=\gamma_\ell
e^{ikx}+\gamma_\ell' e^{-ikx}$, $x\in \ell$, where $k^2=\lambda$.
Since $f\in L^2(\ell)$, we conclude that
$\gamma_\ell=\gamma_\ell'=0$ and so $f\equiv0$ on all leads. It
follows that $f\in\Dom H(i\theta)$ for all $\theta$ and
$H(i\theta)f=\lambda f$. This argument proves that
\begin{equation}
\dim \Ker (H(i\theta)-\lambda I) \geq \dim \Ker(H-\lambda I).
\label{b7}
\end{equation}

2. Let $f\in\Ker (H(i\theta)-\lambda I)$, $\lambda>0$,
$\theta\in(0,\pi)$. Let us prove that $f$ vanishes identically on
all leads. Let $\lambda=k^2$, $k>0$. On every lead, we have
\begin{equation}
f(x)=f(0)\exp(ie^{i\theta}kx). \label{b8}
\end{equation}
Consider the difference
\begin{equation}
\omega(f) = \int_{\cG_0} \abs{f'(x)}^2 dx -
\lambda\int_{\cG_0}\abs{f(x)}^2 dx = \int_{\cG_0} \abs{f'(x)}^2 dx +
\int_{\cG_0}f''(x) \overline{f(x)} dx. \label{b9}
\end{equation}
Integrating by parts, we get
$$
\omega(f) = -\sum_{v\in V^\exte}(N_v^\inte f) \overline{D_v^\inte
f}.
$$
Using the boundary condition \eqref{b3} and formula \eqref{b8}, we
obtain
$$
\omega(f) = ik\sum_{v\in V^\exte} \abs{D_v^\exte f}^2 q(v).
$$
By the definition \eqref{b9} of $\omega(f)$, we have $\Im
\omega(f)=0$. This yields that $\abs{D_v^\exte f}=0$ on all external
vertices $v$. By \eqref{b8}, it follows that $f$ vanishes
identically on all leads.

3. By combining the previous step of the proof with \eqref{b3} and
\eqref{b4} we obtain $D_v^\inte f=N_v^\inte f=0$. It follows that
for any $f\in\Ker (H(i\theta)-\lambda I)$, $\lambda>0$,
$\theta\in(0,\pi)$, we have $f\in\Dom H$ and $Hf=\lambda f$. This
argument also proves that
\begin{equation}
\dim \Ker (H-\lambda I)\geq \dim \Ker (H(i\theta)-\lambda I).
\label{b10}
\end{equation}

4. It remains to prove that if $\lambda>0$ is an eigenvalue of
$H(i\theta)$, $\theta\in(0,\pi)$, then its algebraic  and geometric
multiplicities   coincide. Suppose this is not the case. Then there
exist non-zero elements $f,g\in\Dom H(i\theta)$ such that
$H(i\theta)g=\lambda g$ and $(H(i\theta)-\lambda I)f=g$.

By step 2 of the proof, $g$ vanishes on all leads. It follows that
on all leads the function $f$ satisfies \eqref{b8}. Next, since
$g(x)=-f''(x)-\lambda f(x)$ on $\cG_0$, we have
\begin{equation}
0 < \int_{\cG_0}\abs{g(x)}^2dx = -\int_{\cG_0}(f''(x)+\lambda
f(x))\overline{g(x)}dx. \label{b11}
\end{equation}
Integrating by parts, we get
\begin{multline}
-\int_{\cG_0}(f''(x)+\lambda f(x))\overline{g(x)}dx = -
\int_{\cG_0}f(x)(\overline{g''(x)}+\lambda\overline{g(x)})dx
\\
+ \sum_{v\in V^\exte}(N_v^\inte f)(\overline{D_v^\inte g}) -
\sum_{v\in V^\exte}(D_v^\inte f)(\overline{N_v^\inte g}).
\label{b12}
\end{multline}
Consider the three terms in the r.h.s.\ of \eqref{b12}. The first
term vanishes since $H(i\theta)g=\lambda g$. Next, since $g\equiv 0$
on $\cG_\infty$, we have $D_v^\exte g=N_v^\exte g=0$ for any $v\in
V^\exte$. By the boundary conditions \eqref{b3} and \eqref{b4} for
$g$ it follows that $D_v^\inte g=N_v^\inte g=0$. Thus, the second
and third terms in the r.h.s.\ of \eqref{b12} also vanish. This
contradicts \eqref{b11}.
\end{proof}

\section{Proof of Theorem~\ref{th1}}\label{sec.e}
Here we describe the resonances as zeros of  $\det A(k)$, where
$A(k)$ is certain entire matrix-valued function. Using this
characterisation, we prove our main result.

\subsection{Definition of $A(k)$}\label{sec.d1}

Fix $k\in\C_+$. Let $\cL(k)$ denote the space of all solutions $f\in
L^2(\cG)$ to $-f''=k^2f$ on $\cG$ without any boundary conditions.
The restriction of $f\in\cL(k)$ onto any internal edge $e$ has the
form $f_e(x)=\alpha_e e^{ikx}+\beta_e e^{-ikx}$, and the restriction
of $f$ onto any lead $\ell$ has the form $f_\ell(x)=\gamma_\ell
e^{ikx}$. Thus, $\dim \cL(k)=2\abs{E^\inte}+\abs{E^\exte}$.

Let us describe in detail the set of all conditions on $f\in\cL(k)$
required to ensure that $f$ is a resonance eigenfunction. If $f_e$
denotes the restriction of $f$ to an edge $e$, then we can write the
continuity conditions at the vertex $v$ as
\begin{equation}
f_e(v)=\zeta_v, \quad \forall e\ni v, \label{d1}
\end{equation}
where $\zeta_v\in\C$ is an auxiliary variable. We also have the
condition
\begin{equation}
N_v f=0, \quad v\in V. \quad \label{d2}
\end{equation}
Writing down conditions \eqref{d1}, \eqref{d2} for every vertex
$v\in V$, we obtain
$$
N = \sum_{v\in V}d(v)+\abs{V} = 2\abs{E^\inte}+\abs{E^\exte}+\abs{V}
$$
conditions. Our variables are $\zeta_v$, $\alpha_e$, $\beta_e$,
$\gamma_\ell$; altogether we have
$$
\abs{V}+\dim\cL(k) = \abs{V}+2\abs{E^\inte}+\abs{E^\exte} = N
$$
variables. Let $\zeta$, $\alpha$, $\beta$, $\gamma$ be the sequences
of coordinates $\zeta_v$, $\alpha_e$, $\beta_e$, $\gamma_\ell$ of
length $\abs{V}$, $\abs{E^\inte}$, $\abs{E^\inte}$, $\abs{E^\exte}$
respectively, and let $\nu=(\zeta,\alpha,\beta,\gamma)^\top\in\C^N$.
We may write the constraints \eqref{d1}, \eqref{d2} in the form
$A\nu=0$, where $A$ is an $N\times N$ matrix. Each row of $A$
relates to one of the constraints, and each constraint is of the
form
\begin{equation}
y\cdot \zeta+a\cdot \alpha+b\cdot \beta+g\cdot\gamma=0. \label{d2b}
\end{equation}
If the constraint is of the form \eqref{d2}, then $y=0$ and $a,b,g$
all contain a multiplicative factor $ik$ which we eliminate before
proceeding. The coefficient $a_e$ is $0$, $\pm1$, or $\pm
e^{ik\length(e)}$, and the coefficient $b_e$ is $0$, $\pm1$, or $\pm
e^{-ik\length(e)}$. The coefficient $g_\ell$ is $0$ or $1$, and the
coefficient $y_v$ is $0$ or $-1$.

We have not specified the order of the rows or columns of $A(k)$.
However, the object  of importance in the sequel is the set of zeros
of $\det A(k)$, and the choice of the order of rows or columns of
$A(k)$ will not affect this set.

\subsection{Example}\label{sec.d2}
As an example, let us display the matrix $A(k)$ for a graph which
consists of two vertices $v_1$ and $v_2$, two edges $e_1$ and $e_2$
of length $\length_1$ and $\length_2$ and a lead attached at $v_1$.
In this case we have, denoting $z_j=e^{ik\length_j}$:
\begin{equation}
A(k)= \left(\begin{array}{ccccccc} 0 & 0 & z_1 & z_2 & -z_1^{-1} &
-z_2^{-1} & 0
\\0 & 0 & 1 & 1 & -1 & -1 & 1
\\-1 & 0 & 0 & 0 & 0 & 0 & 1
\\-1 & 0 & 1 & 0 & 1 & 0 & 0
\\-1 & 0 & 0 & 1 & 0 & 1 & 0
\\0 & -1 & z_1 & 0 & z_1^{-1} & 0 & 0
\\0 & -1 & 0 & z_2 & 0 & z_2^{-1} & 0
\end{array}\right).
\label{d2a}
\end{equation}

\subsection{Resonances as zeros of $\det A(k)$ }
Although $A(k)$ was defined above for $k\in\C_+$, we see that all
elements of $A(k)$ are entire functions of $k\in\C$. Thus, we will
consider $A(k)$ as an entire matrix-valued function of $k$.

In Sections~\ref{sec.c}--\ref{sec.d} we prove
\begin{theorem}\label{thm.e1}
Any $k_0\not=0$ is a resonance of $H$ if and only if $\det
A(k_0)=0$. In this case, the algebraic multiplicity of the resonance
$k_0$ coincides with the order of $k_0$ as a zero of $\det A(k)$.
\end{theorem}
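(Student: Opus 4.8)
The plan is to treat the two assertions separately, since the equivalence is essentially built into the construction of $A(k)$ while the multiplicity statement requires genuine spectral input. For the equivalence I would argue directly that, for $k_0\neq0$, a vector $\nu=(\zeta,\alpha,\beta,\gamma)^\top\in\Ker A(k_0)$ is nothing but a choice of coefficients $\alpha_e,\beta_e$ on the internal edges and $\gamma_\ell$ on the leads, together with vertex values $\zeta_v$, such that the corresponding $f\in\cL(k_0)$ obeys the continuity conditions \eqref{d1} and the Kirchhoff conditions \eqref{d2} at every vertex. Because the ansatz $f_\ell(x)=\gamma_\ell e^{ik_0x}$ on each lead already encodes the radiation condition, and $-f''=k_0^2f$ holds edgewise by construction, such an $f$ is a nonzero resonance eigenfunction precisely when $\nu\neq0$. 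Thus $\det A(k_0)=0\iff\Ker A(k_0)\neq0\iff k_0\in\cR$, and in fact $\dim\Ker A(k_0)$ equals the geometric multiplicity of the resonance.

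The substantive claim is the one about algebraic multiplicity, which cannot be read off from $\Ker A(k_0)$ because of possible Jordan structure. Here I would invoke the complex-scaling picture of Theorem~\ref{thm.b2}: fix $\theta\in(0,\pi)$ so that $\lambda_0=k_0^2$ is an isolated eigenvalue of $H(i\theta)$, and recall that, by definition, the algebraic multiplicity of the resonance is the rank $m$ of the Riesz projection of $H(i\theta)$ at $\lambda_0$. Since $H(i\theta)$ has continuous spectrum its resolvent is not trace class, so the right object is the difference $T(\lambda)=(H(i\theta)-\lambda)^{-1}-R_{\mathrm{dec}}(\lambda)$, where $R_{\mathrm{dec}}$ is the resolvent of the decoupled operator obtained by imposing Dirichlet conditions at every vertex on the internal edges and using the exact scaled resolvents on the leads.

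The heart of the argument is a Krein-type resolvent identity, realised through the Dirichlet-to-Neumann map at the vertices, showing that $T(\lambda)$ is finite rank and that all of its $\lambda$-dependence in the singular factor is carried by $A(k)^{-1}$, with $\lambda=k^2$ on the sheet selected by the scaling. One then identifies the resulting perturbation determinant $D(\lambda)$ with $\det A(k)$ up to a factor that is analytic and nonvanishing near $k_0$; the stripped Kirchhoff factors $ik$ and the unscaling factors from \eqref{b3}, \eqref{b4} contribute only such nonvanishing factors. Combining $\frac{1}{2\pi i}\oint_\Gamma \Tr T(\lambda)\,d\lambda = -m$ (the nilpotent part of the singularity is traceless, so only the Riesz projection survives) with the logarithmic-residue identity $\Tr T(\lambda)=-\frac{d}{d\lambda}\log D(\lambda)$ and the argument principle then yields $m=\mathrm{ord}_{\lambda_0}D=\mathrm{ord}_{k_0}\det A$, the last equality because $k\mapsto k^2$ is locally biholomorphic at $k_0\neq0$.

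The main obstacle is to make the resolvent identity clean enough that no spurious pole corrupts this count. The critical values are those $\lambda_0$ that are simultaneously Dirichlet eigenvalues of one or more internal edges: there $R_{\mathrm{dec}}$ is itself singular, so both the background and the finite-rank correction have poles at $\lambda_0$, and one must verify that their combined residue reproduces exactly the pole of $(H(i\theta)-\lambda)^{-1}$ and nothing more. Equivalently, one must show that the characteristic determinant of the boundary problem and $\det A(k)$ differ by a factor that remains analytic and nonvanishing even across these exceptional points, so that the zeros of $\det A$ at such $k_0$ still count the algebraic multiplicity correctly. Establishing this bookkeeping --- which is exactly the role of the Dirichlet-to-Neumann analysis of Sections~\ref{sec.c}--\ref{sec.d} --- is where the real work lies; once it is in place the two contour integrals are literally equal and the multiplicities coincide.
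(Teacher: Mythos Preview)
Your outline is correct and follows the paper's approach closely: the resolvent identity of Theorem~\ref{thm.c1} expresses $R^\varkappa-R_D^\varkappa$ via $\Lambda(k)^{-1}$, Theorem~\ref{thm.c2} turns its trace into the logarithmic derivative of $\det\Lambda(k)$, and the contour integral of the Riesz projection then gives the multiplicity. The Dirichlet-eigenvalue obstacle you flag is resolved exactly as you anticipate---the paper replaces $R_D^\varkappa$ by $\chi_\infty R_D^\varkappa\chi_\infty$ (pole-free off the essential spectrum), and Lemma~\ref{lma.d1} shows that the resulting combination $\delta(k)\det\Lambda(k)$ differs from $\det A(k)$ only by a nonvanishing power of $k$, so the argument principle yields precisely the order of the zero of $\det A$ at $k_0$.
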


The first part of this theorem is obvious: by the construction of
the matrix $A$, we have $\det A(k_0)=0$ iff there exists a non-zero
resonance eigenfunction $f\in\cL(k_0)$. The part concerning
multiplicity is less obvious. Unfortunately, we were not able to
find a completely elementary proof of this part. The proof we give
in Sections~\ref{sec.c}--\ref{sec.d} involves a standard set of
techniques of spectral theory of quantum graphs: a resolvent
identity involving the Dirichlet-to-Neumann map and a certain trace
formula.

By Theorem~\ref{thm.e1}, the question reduces to counting the total
multiplicity of zeros of the entire function $\det A(k)$ in large
discs. As it is clear from the structure of the matrix $A(k)$, its
determinant is an exponential polynomial, i.e.\ a linear combination
of the terms of the type $e^{i\sigma k}$, $\sigma\in\R$. Thus, we
need to discuss the zeros of exponential polynomials.

\subsection{Zeros of exponential polynomials}\label{sec.e1}
Exponential polynomials are entire functions $F(k)$, $k\in\C$, of
the form
\begin{equation}
F(k)=\sum_{r=1}^n a_r \rme^{i\sigma_r k}, \label{e1}
\end{equation}
where $a_r,\, \sigma_r\in\C$ are constants. The study of the zeros
of such polynomials has a long history; see e.g.\ \cite{langer} and
references therein. For more recent literature see \cite{moreno}.
Some of these results were rediscovered in
\cite{davies,davinc,incani}, where they were used to analyze the
spectra of non-self-adjoint systems of ODEs and directed finite
graphs. The asymptotic distribution of the zeros of $F$ depends
heavily on the location of the extreme points of the convex hull of
the set  $\cup_{r=1}^n\{\sigma_r\}$.

We are only interested in the case in which $\sigma_r$ are distinct
real  numbers. We denote $\sigma^-=\min\{\sigma_1,\dots,\sigma_n\}$
and $\sigma^+=\max\{\sigma_1,\dots,\sigma_n\}$. For $R>0$ we denote
by $N(R;F)$ the number of zeros of $F$ (counting the orders) in the
disc $\{k\in\C: \abs{k}<R\}$. The following classical statement is
from \cite[Theorem~3]{langer}.
\begin{theorem} \label{langer}
Let $F$ be a function of the form \eqref{e1}, where $a_r$ are
non-zero complex numbers and $\sigma_r$ are distinct real numbers.
Then there exists a constant $K<\infty$ such that all the zeros of
$F$ lie within a strip of the form $\{ k: |\Im(z)| \leq K\}$. The
counting  function $N(R;F)$ satisfies
$$
N(R;F)=\frac{\sig^+-\sig^-}{\pi}R+O(1)
$$
as $R\to +\infty$.
\end{theorem}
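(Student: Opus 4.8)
The plan is to prove the counting asymptotics for an exponential polynomial $F(k)=\sum_{r=1}^n a_r e^{i\sigma_r k}$ with distinct real exponents by a standard argument of the argument principle applied along the boundary of a large rectangle, isolating the two extreme exponents $\sigma^+$ and $\sigma^-$ as the dominant terms in the upper and lower half-planes respectively. First I would establish the horizontal strip confinement: writing $F(k)=e^{i\sigma^+ k}\big(a_{+}+\sum_{\sigma_r<\sigma^+}a_r e^{i(\sigma_r-\sigma^+)k}\big)$, where $a_+$ is the coefficient of $e^{i\sigma^+ k}$, the exponents $\sigma_r-\sigma^+$ are strictly negative, so when $\Im k\to -\infty$ every correction term $e^{i(\sigma_r-\sigma^+)k}=e^{(\sigma_r-\sigma^+)(i\,\Re k-\Im k)}$ decays like $e^{(\sigma_r-\sigma^+)(-\Im k)}$; hence the bracketed factor is bounded below by $|a_+|/2>0$ once $-\Im k$ is large enough, forcing $F(k)\neq 0$. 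A symmetric computation factoring out $e^{i\sigma^- k}$ handles the region $\Im k\to+\infty$. This yields the constant $K$ such that all zeros satisfy $|\Im k|\le K$.

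Next I would count zeros by the argument principle. Fix $R>0$ large and consider the rectangle $\Gamma_R$ with vertical sides $\Re k=\pm R$ and horizontal sides $\Im k=\pm K'$ for some $K'>K$, so that $\Gamma_R$ encloses exactly the zeros with $|\Re k|<R$ (up to an $O(1)$ discrepancy between the disc $\{|k|<R\}$ and the rectangle, since the strip has bounded height the two regions differ by $O(1)$ zeros). Then $N(R;F)=\frac{1}{2\pi}\Delta_{\Gamma_R}\arg F(k)$. On the upper side $\Im k=K'$, factor out the dominant term $e^{i\sigma^- k}$: the change in argument of $e^{i\sigma^- k}$ as $\Re k$ runs from $-R$ to $R$ is $2\sigma^- R$ (contributed by the oscillatory factor $e^{i\sigma^-\Re k}$), while the bracketed factor stays close to a nonzero constant and contributes only $O(1)$. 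Traversing the top from right to left, this gives $-\sigma^-\cdot 2R+O(1)$ with the appropriate sign. On the lower side $\Im k=-K'$, the dominant term is $e^{i\sigma^+ k}$, contributing $\sigma^+\cdot 2R+O(1)$.

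The vertical sides $\Re k=\pm R$ have bounded length $2K'$, so the argument of $F$ can change by at most $O(1)$ along each — but to make this rigorous one must ensure $|F|$ does not become too small there; I would choose the sequence of radii $R$ avoiding the real parts of zeros (or simply bound the total variation of $\arg F$ on a segment of bounded length using that $F$ is a fixed finite sum of exponentials with controlled derivative), giving $O(1)$. Summing the four contributions, $2\pi N(R;F)=(\sigma^+-\sigma^-)\cdot 2R+O(1)$, which is the claimed $N(R;F)=\frac{\sigma^+-\sigma^-}{\pi}R+O(1)$.

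The main obstacle is the control of $\arg F$ along the two vertical sides of length $2K'$: naively the argument could wind many times if $F$ happens to be small somewhere on $\Re k=\pm R$. The clean fix is to bound the number of zeros of $F$ in a unit-height box $\{|\Re k - R|\le 1,\ |\Im k|\le K'\}$ by an absolute constant independent of $R$ — this follows because $F$ restricted to such a translated box is, after factoring out $e^{i\sigma^- k}$, a uniformly bounded family of analytic functions converging (along any subsequence of $R$) to a nonzero limit, so by Hurwitz the zero count is uniformly $O(1)$; alternatively one invokes Jensen's formula on these boxes. This reduces the vertical-side contribution to $O(1)$ and completes the estimate. Since this result is quoted verbatim from \cite[Theorem~3]{langer}, I would in practice cite it rather than reproduce the full argument-principle computation.
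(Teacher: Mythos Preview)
The paper does not prove Theorem~\ref{langer} at all: it is stated as a classical result and attributed to \cite[Theorem~3]{langer}, with no argument given. You correctly recognize this in your final sentence. Your sketch is the standard argument-principle proof and is essentially correct; the identification of the dominant exponential ($\sigma^-$ on the upper horizontal side, $\sigma^+$ on the lower) and the resulting contribution $2(\sigma^+-\sigma^-)R$ to $\Delta\arg F$ are right, as is the observation that the disc and the rectangle differ by $O(1)$ zeros because all zeros lie in a strip of bounded height. The only genuinely delicate point you flag yourself: controlling $\Delta\arg F$ on the vertical sides requires knowing that the number of zeros in any box $\{|\Re k - x_0|\le 1,\ |\Im k|\le K'\}$ is bounded independently of $x_0$, which follows from Jensen's inequality applied to $e^{-i\sigma^- k}F(k)$ (a bounded analytic function on such boxes, with a uniform lower bound at the center after a translation). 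With that in hand the vertical contributions are $O(1)$ and the proof closes.
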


\subsection{Estimate for $N(R;F)$}\label{sec.e2}
Here we prove the first part of the main Theorem~\ref{th1}. Let
$F(k)=\det A(k)$. From the structure of $A(k)$ it is clear that
$F(k)$ is given by \eqref{e1} where $a_r, \sigma_r$ are real
coefficients. By Theorem~\ref{langer}, it suffices to prove that in
the representation \eqref{e1} we have
\begin{equation}
\sigma^+\leq \vol(\cG_0), \quad \sigma^-\geq -\vol(\cG_0).
\label{e2}
\end{equation}
In order to prove \eqref{e2}, let us discuss the entries of $A(k)$
in detail. For simplicity of notation we will not draw attention in
our equations to the fact that all of the matrices below depend on
$k$.

The matrix $A$ has some constant terms and some terms that are
exponential in $k$. The term $e^{ik\length(e)}$ can only appear in
the column associated with the variable $\alpha_e$ and the term
$e^{-ik\length(e)}$ can only appear in the column associated with
the variable $\beta_e$. The columns associated with the variables
$\zeta$ and $\gamma$ contain only constant terms. Since the
determinant is formed from the products of entries of $A$ where
every column contributes one entry to each product, we see that the
maximum possible value for the coefficient $\sigma_r$ in \eqref{e1}
is attained when every column corresponding to the variable
$\alpha_e$ contributes the term $e^{ik\length(e)}$ and every column
corresponding to $\beta_e$ contributes a constant term to the
product. The maximal value of $\sigma_r$ thus attained will be
exactly $\sum_{e\in E^\inte}\length(e)=\vol\cG_0$. This proves the
first inequality in \eqref{e2}. The second one is proven in the same
way by considering the minimal possible value for $\sigma_r$.

Of course,  the coefficients $a^\pm$ of the terms $e^{\pm
ik\vol(\cG_0)}$ in the representation \eqref{e1} for $\det A$ may
well happen to be zero. Theorem~\ref{th1} will be proven if we show
that these coefficients do not vanish if and only if every external
vertex of $\cG$ is unbalanced. In what follows, for an exponential
polynomial $F$ with the representation \eqref{e1} we denote by
$a^\pm(F)$ the coefficient $a_r$ of the term $e^{i\sigma_rk}$,
$\sigma_r=\pm\vol(\cG_0)$.

\subsection{Invariance of resonances with respect to a change of orientation}\label{sec.e2a}
Before proceeding with the proof, we need to discuss a minor
technical point. Our definition of the matrix $A(k)$ assumes that a
certain orientation of all internal edges of $\cG$ is fixed. Suppose
we have changed the parameterisation of an internal edge $e$ by
reversing its orientation. In other words, suppose that instead of
the variable $x\in[0,\length(e)]$ we decided to use the variable
$x'=\length(e)-x$. We claim that this change will not affect the
zeros of $\det A(k)$.

Indeed, let $A'(k)$ be the matrix corresponding to the new
parametrization. The matrix $A'(k)$ corresponds to the
parametrization of solutions $f\in\cL(k)$ on $e$ by $f(x)=\alpha'_e
e^{ikx'}+\beta'_e e^{-ikx'}$ instead of $\alpha_e e^{ikx}+\beta_e
e^{-ikx}$. We have
$$
\begin{pmatrix}
\alpha'_e \\ \beta'_e
\end{pmatrix}
=
\begin{pmatrix}
0 & e^{-ik\length(e)}
\\
e^{ik\length(e)}& 0
\end{pmatrix}
\begin{pmatrix}
\alpha_e \\ \beta_e
\end{pmatrix},
\quad \det
\begin{pmatrix}
0 & e^{-ik\length(e)}
\\
e^{ik\length(e)}& 0
\end{pmatrix}
=-1,
$$
and thus $\det A'(k)=-\det A(k)$.

\subsection{Proof of Theorem~\ref{th1}: the balanced case}\label{sec.e3}
Assume that a particular external vertex $v$ of $\cG$ is balanced.
Below we prove that the coefficient $a^+(\det A)$  vanishes.

Let us re-order the rows and columns of $A$ by reference to the
vertex $v$. We assume that $q$ internal edges and $q$ leads are
attached  to $v$, $q\geq 2$. (The case $q=1$ is trivial because one
may then merge the lead with the edge to which it is connected.)
Using the observation of Section~\ref{sec.e2a}, we can choose an
orientation of these internal edges so that they all end at $v$
(i.e.\ $v$ is identified with the point $\rho(e)$ of the intervals
$[0,\rho(e)]$). Let the first $2q$ rows of $A$ be those relating to
the conditions \eqref{d1} for the vertex $v$ and let the $(2q+1)$st
row be the one relating to the condition \eqref{d2} for the vertex
$v$. The ordering of the remaining rows does not matter. Let the
first $2q$ columns be related to the variables
$\gamma_1,\dots,\gamma_q,\alpha_1,\dots,\alpha_q$ and let the
$(2q+1)$st column be related to the variable $\zeta_v$; these variables were all defined in Section~\ref{notation}. The ordering
of the remaining columns does not matter.

We write $A$ in the block form
\begin{equation}
A=
\begin{pmatrix}
B&C
\\
D&E
\end{pmatrix}
\label{e3}
\end{equation}
where $B$ is a $(2q+1)\times(2q+1)$ matrix. For example, in the case
$q=2$ we have
\begin{equation}
B= \left(\begin{array}{ccccc}
1 & 0 & 0 & 0 & -1 \\
0 & 1 & 0 & 0 & -1 \\
0 & 0 & z_1 & 0 & -1 \\
0 & 0 & 0 & z_2 & -1 \\
1 & 1 & -z_1 & -z_2 & 0
\end{array}\right),
\label{e4}
\end{equation}
where $z_r=e^{ik\length(e_r)}$.

The determinant is the sum of the products of entries of $A$ where
every column contributes one entry to each product. In order for the
product to be of the type $a_+ e^{ik\vol(\cG_0)}$, each column
corresponding to a variable $\alpha_e$ must contribute the entry
$e^{ik\length(e)}$. Thus, the constant entries of the columns
corresponding to the variables $\alpha_e$ are irrelevant to our
question and can be replaced by zeros; this will not affect the
value of $a^+(\det A)$. Noticing that the columns of $D$
corresponding to the variables $\gamma_1,\dots,\gamma_q$ and $\zeta_v$ are all zeros, we conclude that
$$
a^+(\det A)=a^+(\det A_0), \quad \text{where} \quad A_0 =
\begin{pmatrix}
B&C\\
0&E
\end{pmatrix}.
$$
By a general matrix identity, $\det A_0=\det B\det E$. Finally, a
simple row reduction shows that $\det B=0$; this is easy to see in
the case of \eqref{e4}. Thus, the coefficient $a^+(\det A)$
vanishes. By \eqref{e2}, it follows that $\sigma^+<\vol \cG_0$, as
claimed.

We note (although this is not needed for our proof) that
$\sigma^-=-\vol\cG_0$ both in the balanced and in the unbalanced
case; this will be clear from the next part of the proof.
\subsection{Proof of Theorem~\ref{th1}: the unbalanced case}\label{sec.e4}
Assume that all external vertices are unbalanced. We will prove that
\begin{equation}
\sigma^+=\vol(\cG_0), \quad \sigma^-=-\vol(\cG_0). \label{e5}
\end{equation}
The proof uses the same reduction as \eqref{e3} in
Section~\ref{sec.e3}, but the details are somewhat more complicated,
since now we have to consider \emph{all} external vertices.

We label the external vertices by $v_1$,\dots,$v_m$ where
$m=|V^\exte|$. Let $\cG_r$ denote the graph obtained from $\cG_0$ by
adding all the leads of $\cG$ that have ends in the set $\{
v_1,\ldots,v_r\}$, so that $\cG_m=\cG$. Let $A_r$ denote the
constraint matrix $A$ corresponding to the graph $\cG_r$ and let
$a^\pm_r=a^\pm(\det A_r)$.

By the previous reasoning, the graph $\cG_r$ is Weyl if and only if
$a_r^+\not=0$ and $a_r^-\not=0$. Our claim \eqref{e5} follows
inductively from the following statements:
\begin{enumerate}[1.]
\item The graph $\cG_0$ is Weyl.
\item The coefficient  $a_r^-$ is non-zero for all $r$.
\item For all $r$, if  $a_{r-1}^+\not=0$ then $a_r^+\not=0$.
\end{enumerate}

Item~1 holds because the  operator $H$ on $\cG_0$ has discrete
spectrum and no other resonances. The eigenvalues obey the Weyl law
by a standard variational argument using Dirichlet-Neumann
bracketing.

Let us prove item~3. We reorder the rows and columns of $A_r$ with
reference to $v_r$ as in Section~\ref{sec.e3}. We assume that $p$
internal edges $e_1$,\dots,$e_p$ and $q$ leads $\ell_1$,\dots,
$\ell_q$ are attached to $v_r$, and $q\not=p$. The first $q+p+1$
columns of $A_r$ are those relating to the variables $\gamma_1$,
\dots, $\gamma_q$ (associated with $\ell_1$, \dots, $\ell_q$),
$\alpha_1$, \dots,$\alpha_p$ (associated with $e_1$,\dots,$e_p$),
and $\zeta_r$. The first $q+p+1$ rows of $A_r$ are those relating to
the conditions \eqref{d1} and \eqref{d2} for the vertex $v_r$. As in
Section~\ref{sec.e3}, this allows us to write
\begin{equation}
A_r =
\begin{pmatrix}
B_r&C_r\\
D_r&E_r
\end{pmatrix}
\label{e6}
\end{equation}
where $B_r$ is a $(q+p+1)\times(q+p+1)$ matrix. Writing the matrix
$A_{r-1}$ in the same way with reference to \emph{the same vertex}
$v_r$, we obtain
\begin{equation}
A_{r-1} =
\begin{pmatrix}
\wt B_{r-1}&\wt C_{r-1}\\
\wt D_{r-1}&E_{r}
\end{pmatrix},
\label{e7}
\end{equation}
where $\wt B_{r-1}$ is a $(p+1)\times(p+1)$ matrix. In other words,
$\wt B_{r-1}$, $\wt C_{r-1}$, $\wt D_{r-1}$ are the matrices $B_r$,
$C_r$, $D_r$ with relevant $q$ rows and $q$ columns deleted. The
deleted columns correspond to the variables
$\gamma_1$,\dots,$\gamma_q$, and the deleted rows correspond to the
conditions \eqref{d1} associated with the leads $\ell_1$, \dots,
$\ell_q$. Note that the matrix $E_r$ is the same in \eqref{e6} and
\eqref{e7}.

Next, just as in the argument of Section~\ref{sec.e3}, we notice
that
$$
a_r^+=a^+(\det B_r\det E_r) \quad \text{ and }\quad
a_{r-1}^+=a^+(\det\wt B_{r-1}\det E_r).
$$
Finally, by a simple row reduction we obtain
\begin{align}
\det B_r&=(q-p)z_1\dots z_p, \label{e8}
\\
\det \wt B_{r-1}&=(-p)z_1\dots z_p,
\end{align}
where $z_j=e^{ik\length(e_j)}$. It follows that $a^+_r$ and
$a^+_{r-1}$ differ by a non-zero coefficient $(p-q)/p$. This proves
Item 3.

Let us prove Item 2. Here the argument follows that of the proof of
Item 3, only instead of keeping track of the coefficient of
$e^{ik\vol(\cG_0)}$ we need to keep track of the coefficient of
$e^{-ik\vol(\cG_0)}$, and instead of the variables
$\alpha_1$,\dots,$\alpha_p$ we consider the variables
$\beta_1$,\dots, $\beta_p$. Instead of the coefficient $(q-p)$ in
\eqref{e8} we get $(q+p)$, which never vanishes (even if $v_r$ is
balanced). This proves our claim.

\section{A resolvent identity and its consequences}\label{sec.c}

In order to complete the proof of Theorem~\ref{th1}, it remains to
provide the proof of Theorem~\ref{thm.e1}. Theorem~\ref{thm.c1}
below provides an explicit  formula for the difference
$R^\varkappa(k)-R_D^\varkappa(k)$ in terms of the
Dirichlet-to-Neumann map. This leads immediately to the trace
formula \eqref{c12}, which is the key to our proof of
Theorem~\ref{thm.e1} in Section~\ref{sec.d}. The formulae obtained
in this section are ``complex-scaled'' versions of resolvent
identities well known in the theory of boundary value problems, see
e.g.\ \cite{gesztesy} and references therein.

\subsection{Dirichlet-to-Neumann map}\label{sec.c1}
Throughout this section, we assume that the parameter $k\in\C_+$ is
fixed. Let $\cL(k)$ be as defined in Section~\ref{sec.d1} and let
$\cM(k)=\cL(k)\cap C(\cG)$. Each $f\in\cM(k)$ determines a vector
$\zeta\in \C^{|V|}$ by restriction to $V$. Conversely, every
$\zeta\in\C^{|V|}$ arises from a function $f\in\cM(k)$; this can be
seen by comparing $\dim \cL(k)$ with the number of constraints
imposed by writing $f(v)=\zeta_v$, $v\in V$. Finally, the assumption
$k\in\C_+$ implies that only one function $f\in\cM(k)$ corresponds
to each set of values $\zeta\in\C^{|V|}$ (otherwise we would have a
complex eigenvalue of the operator with Dirichlet boundary
conditions on all vertices). This shows that we may define the
Dirichlet-to-Neumann map $\Lambda(k): \C^{|V|}\to \C^{|V|}$ by
$$
(\Lambda(k)\zeta)_v=N_vf
$$
where $f$ corresponds to $\zeta$ as described above and $N_v$ was
defined in Section~\ref{notation}. This map is a well known tool in
the spectral theory of boundary value problems and has also been
used in quantum graph theory \cite{Ong,Kuchment2}.

\subsection{The functions $\varphi_v$ and formulae for $\Lambda$}\label{sec.c2}
Given $v\in V$, let $\varphi_v$ be the function in $\cM(k)$ that
satisfies
$$
\varphi_v(u)=\delta_{uv}, \quad \forall u,v\in V.
$$
The functions $\varphi_v$ are given by the following explicit
expressions. Let $v\in e$, $e\in E^\inte$ and identify $e$ with
$[0,\length]$ where $v$ corresponds to the point $0$. Then
\begin{equation}
\varphi_v(x)=\frac{\sin k(\length-x)}{\sin k\length}, \quad
x\in[0,\length]=e. \label{c1}
\end{equation}
In the same way, if $e\in E^\exte$ and $v$ is identified with the
point $0$, then
\begin{equation}
\varphi_v(x)=e^{ikx}, \quad x\in[0,\infty)=e. \label{c2}
\end{equation}
If the dependence on $k$ needs to be emphasized, we will write
$\varphi_v(x;k)$ instead of $\varphi_v(x)$.

\begin{lemma}\label{extralemma}
If $k\in\C_+$ then the map $\Lambda(k)$ is invertible. Its matrix
entries are given by
\begin{align}
\Lambda_{uv}&=0, & \text{if $u\not=v$, $u\not\sim v$;} \label{c3}
\\
\Lambda_{uv}&= \sum_{\genfrac{}{}{0pt}{}{e\in E^\inte}{u,v\in
e}}\frac{k}{\sin k\length(e)}, & \text{if $u\not=v$, $u\sim v$;}
\label{c4}
\\
\Lambda_{vv}&=ikq(v)-k\sum_{\genfrac{}{}{0pt}{}{e\in E^\inte}{v\in
e}} \cot (k\length(e)), & \text{for any $v\in V$;} \label{c5}
\end{align}
where $q(v)$ was defined in Section~\ref{notation}.
\end{lemma}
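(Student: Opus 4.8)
The plan is to diagonalise the problem in the basis $\{\varphi_v\}_{v\in V}$, read off the three matrix entries directly from the explicit formulae \eqref{c1}, \eqref{c2}, and reserve the only analytic input for the invertibility claim.

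First I would record the representation that makes $\Lambda(k)$ computable. Since $\varphi_v(u)=\delta_{uv}$, for any $\zeta\in\C^{|V|}$ the function $f=\sum_{v\in V}\zeta_v\varphi_v$ lies in $\cM(k)$ and satisfies $f(u)=\zeta_u$ for every $u\in V$; by the uniqueness noted in Section~\ref{sec.c1}, this is precisely the function associated with $\zeta$. Applying $N_u$ and using linearity gives $(\Lambda(k)\zeta)_u=N_uf=\sum_{v}\zeta_v\,N_u\varphi_v$, so that $\Lambda_{uv}=N_u\varphi_v$. Thus the entire computation reduces to differentiating the explicit functions $\varphi_v$ edge by edge.

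Next I would compute $N_u\varphi_v$ in the three cases. The key observation is that $\varphi_v$ vanishes identically on every edge not incident to $v$: such an edge carries Dirichlet data $0$ at both ends, and (equivalently, directly from \eqref{c1}, \eqref{c2}) the only solution of $-f''=k^2f$ with this data is $0$. Hence if $u\neq v$ and $u\not\sim v$, then $\varphi_v$ is zero on every edge attached to $u$, giving $\Lambda_{uv}=0$ and \eqref{c3}. If $u\neq v$ and $u\sim v$, only the internal edges joining $u$ and $v$ contribute; parametrising such an edge as in \eqref{c1} with $v$ at $0$ and $u$ at $\length(e)$, the outgoing derivative of $\varphi_v$ at $u$ is $-\varphi_v'(\length(e))=k/\sin k\length(e)$, and summing over all edges joining $u$ and $v$ yields \eqref{c4}. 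If $u=v$, every edge at $v$ contributes: each internal edge gives the outgoing derivative $\varphi_v'(0)=-k\cot k\length(e)$ from \eqref{c1}, while each of the $q(v)$ leads gives $ik$ from \eqref{c2}; adding these produces \eqref{c5}. The only point requiring care is the sign convention for outgoing derivatives at the two ends of an edge, which is routine.

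Finally I would prove invertibility, which is where the hypothesis $k\in\C_+$ does real work. Suppose $\Lambda(k)\zeta=0$ and set $f=\sum_v\zeta_v\varphi_v\in\cM(k)$. Then $N_vf=0$ at every vertex, $f$ is continuous and lies in $W^2_2$ on each edge, and $f\in L^2(\cG)$ because $\cL(k)\subset L^2(\cG)$; hence $f\in\Dom H$ and $Hf=k^2f$. Since $\Spec H=[0,\infty)$ while $k\in\C_+$ forces $k^2\notin[0,\infty)$, we conclude $f\equiv0$ and therefore $\zeta_v=f(v)=0$ for all $v$. Thus $\Lambda(k)$ has trivial kernel and, being a square matrix, is invertible. (Alternatively, Green's identity gives $\sum_v\overline{\zeta_v}(\Lambda(k)\zeta)_v=k^2\norm{f}^2-\norm{f'}^2$, from which $\Lambda(k)\zeta=0$ would force $k^2=\norm{f'}^2/\norm{f}^2\in[0,\infty)$ unless $f=0$, the same contradiction.) I expect this invertibility step, resting on the location of $\Spec H$, to be the only genuinely non-mechanical part; the entry formulae are a direct, if sign-sensitive, calculation.
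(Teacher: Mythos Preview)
Your proposal is correct and follows essentially the same approach as the paper: the paper also identifies $\Lambda_{uv}=N_u\varphi_v$, obtains the entry formulae from \eqref{c1}--\eqref{c2}, and proves invertibility by noting that $\Lambda(k)\zeta=0$ forces the associated $f\in\cM(k)$ into $\Dom H$ with $Hf=k^2f$, contradicting $\Spec H=[0,\infty)$ when $k\in\C_+$. Your write-up is simply a more explicit version of the paper's terse argument, with the edge-by-edge computations and sign checks spelled out.
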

\begin{proof}
If $\Lambda(k)\zeta=0$, then the corresponding function
$f\in\cM(k)\subset L^2(\cG)$ satisfies the Kirchhoff's boundary
condition at every vertex, which implies that $f\in\Dom H$ and
$Hf=k^2f$. Since $\Spec(H)=[0,\infty)$ and $\Im k>0$, this implies
that $f=0$. Therefore $\Lambda(k)$ is invertible.

By the definition of $\varphi_v$, we have
$$
\Lambda_{uv}=N_u\varphi_v.
$$
The formulae for the matrix entries are obtained by combining this
with \eqref{c1} and \eqref{c2}.
\end{proof}

It follows from Lemma~\ref{extralemma} that $\Lambda(k)$ can be
extended to a meromorphic function of $k\in\C$ whose poles are all
on the real axis, and that for any $u,v\in V$ one has
\begin{equation}
\Lambda_{uv}(k)=\Lambda_{vu}(k) \quad\text{ and }\quad
\overline{\Lambda_{uv}(k)}=\Lambda_{uv}(-\overline{k}), \quad
k\in\C. \label{c5a}
\end{equation}

In the calculations below the expressions $\Lambda_{uv}^{-1}$ will
denote the matrix entries of $(\Lambda(k))^{-1}$.

\subsection{The complex-scaled version of $\varphi_v$}\label{sec.c3}
We will need a version of the functions $\varphi_v$ pertaining to
the ``complex-scaled'' operator $H(\varkappa)$. Let $k\in\C_+$ and
$\varkappa\in\C$ be such that $ke^{\varkappa}\in\C_+$. Given $v\in
V$,we define $\vf$ by
$$
\vf(x;k) =
\begin{cases}
\varphi_v(x;k), & \text{if $x\in\cG_0$;}
\\
\varphi_v(0;k)e^{\varkappa/2}\exp(ike^{\varkappa}x), & \text{if
$x\in \ell=[0,\infty)$, $\ell\in E^\exte$.}
\end{cases}
$$
Clearly, $\vf$ is a solution to the equation $H(\varkappa)\vf
=k^2\vf$ on every edge  of $\cG$. It is also straightforward to see
that $\vf\in\wt C(\cG)$  and $\vf$ satisfies the boundary condition
\eqref{b3} on every external vertex. For $f\in\wt C(\cG)$, let us
denote
$$
N^\varkappa_v f =
\begin{cases}
N_v f, & \text{ if $v\in V^\inte$,}
\\
N_v^\inte f+e^{-3\varkappa/2} N_v^\exte f, & \text{ if $v\in
V^\exte$.}
\end{cases}
$$
It is straightforward to see that
\begin{equation}
\Lambda_{uv}=N_u^\varkappa \vf, \quad \forall u,v\in V, \label{c6}
\end{equation}
where the l.h.s.\ depends on $k$ but not on $\varkappa$. Moreover
\begin{equation}
\overline{\vf(x;k)}=\varphi_v^{\overline{\varkappa}}(x;-\overline{k}).
\label{c6a}
\end{equation}

\subsection{The resolvent identity}\label{sec.c4}

Let $H_D$ be the self-adjoint operator in $L^2(\cG)$ defined by
$H_Df=-f''$ with a Dirichlet boundary condition at every vertex of
$\cG$. Given $\varkappa\in\C$, we define the ``complex-scaled''
version of $H_D$ as follows; $H_D(\varkappa)$ is the operator acting
in $L^2(\cG)$ defined by
$$
(H_D(\varkappa)f)(x) =
\begin{cases}
-f''(x), &\text{if $x\in\cG_0$},
\\
-e^{-2\varkappa}f''(x), &\text{if $x\in\cG_\infty$,}
\end{cases}
$$
with a Dirichlet boundary condition at every vertex of $\cG$. Of
course, $H_D(\varkappa)$ splits into an orthogonal sum of operators
acting on $L^2(e)$ for all $e\in E$. We see immediately that in
addition to its essential spectrum $e^{-2\varkappa}[0,\infty)$, the
operator $H_D(\varkappa)$ has a discrete set of positive eigenvalues
with finite multiplicities.

We set
$$
R_D^\varkappa(k)=(H_D(\varkappa)-k^2 I)^{-1}, \quad
R^\varkappa(k)=(H(\varkappa)-k^2 I)^{-1},
$$
whenever the inverse operators exist. We denote by $R^\varkappa(k;
x,y)$ (resp. $R^\varkappa_D(k;x,y)$), $x,y\in\cG$, the integral
kernel of the resolvent $R^\varkappa(k)$ (resp. of
$R_D^\varkappa(k)$).

The fact that $H_D(\varkappa)$ and $H(\varkappa)$ coincide except
for different boundary conditions at each of the $|V|$ vertices
indicates that the difference of the two resolvents should have rank
$|V|$. Our next theorem makes this explicit. Formulae of this type
are well known in the theory of boundary value problems; see e.g.\
\cite{gesztesy} and references therein. In the context of graphs,
similar considerations have been used in \cite{Kostrykin1,Kostrykin2,Kostrykin3,Ong}.

\begin{theorem}\label{thm.c1}
For any $k\in\C_+$ and any $\varkappa\in\C$, such that
$ke^{\varkappa}\in\C_+$, we have
\begin{equation}
R^\varkappa(k;x,y) - R^\varkappa_D(k;x,y) =- \sum_{u,v\in V}
\Lambda_{uv}^{-1}(k)\varphi_v^\varkappa(x;k)\varphi_u^\varkappa(y;k),
\label{c7}
\end{equation}
for any $x,y\in\cG$.
\end{theorem}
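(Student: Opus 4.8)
The plan is to verify the resolvent identity \eqref{c7} directly, by checking that the operator defined by the right-hand side, when added to $R_D^\varkappa(k)$, produces a genuine inverse of $H(\varkappa)-k^2I$. Denote by $G(x,y)$ the right-hand side of \eqref{c7}, and let $T$ be the integral operator with kernel $R_D^\varkappa(k;x,y)+G(x,y)$. I would prove the identity by showing that $T=(H(\varkappa)-k^2I)^{-1}$, which amounts to checking two things: first, that for every $h\in L^2(\cG)$ the function $f=Th$ lies in $\Dom H(\varkappa)$, and second, that $(H(\varkappa)-k^2I)f=h$. Since $R_D^\varkappa(k)$ already inverts $H_D(\varkappa)-k^2I$ edge by edge, the essential point is that the correction term $G$ repairs the boundary behaviour: it converts the Dirichlet conditions satisfied by $R_D^\varkappa(k)h$ into the continuity and Kirchhoff-type conditions \eqref{b3}, \eqref{b4} required by $\Dom H(\varkappa)$, without destroying the differential equation.

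The key structural fact to exploit is that each $\varphi_v^\varkappa$ solves $(H(\varkappa)-k^2I)\varphi_v^\varkappa=0$ on every edge. Therefore, applying $H(\varkappa)-k^2I$ to $f=Th$ annihilates the entire sum $G$ in the interior of every edge, leaving only $(H_D(\varkappa)-k^2I)R_D^\varkappa(k)h=h$. This gives the differential equation immediately, away from the vertices. First I would record that $f\in\wt C(\cG)$ and that $f$ satisfies \eqref{b3} at every external vertex: the Dirichlet part $R_D^\varkappa(k)h$ vanishes at all vertices, and each $\varphi_v^\varkappa$ belongs to $\wt C(\cG)$ and satisfies \eqref{b3} by the construction in Section~\ref{sec.c3}, so the same holds for their linear combination $G$. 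The substantive computation is the boundary condition \eqref{b4} (equivalently $N_u^\varkappa f=0$ for all $u$): using the definition $(\Lambda(k)\zeta)_u=N_u\varphi_v^\varkappa$ encoded in \eqref{c6}, together with the jump in the normal derivative produced by the Dirichlet Green's function $R_D^\varkappa(k;x,y)$ as $x\to u$, the coefficient $-\sum_v\Lambda_{uv}^{-1}$ is exactly what is needed to cancel the Dirichlet contribution. Concretely, computing $N_u^\varkappa f$ term by term yields the Dirichlet boundary term plus $-\sum_{u',v}\Lambda_{uv}^{-1}\Lambda_{u'v}(\dots)$, and the inversion relation $\sum_v\Lambda_{uv}^{-1}\Lambda_{vw}=\delta_{uw}$ collapses this to zero.

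I expect the main obstacle to be the careful bookkeeping of the normal-derivative jump of the Dirichlet resolvent kernel at a vertex, and matching its normalisation against the defining property \eqref{c6} of $\Lambda$. One must identify precisely the value of $R_D^\varkappa(k;\cdot,y)$ and of its jump near a vertex $u$, so that the rank-$|V|$ correction reproduces the difference between Dirichlet and Kirchhoff data; this is where the invertibility of $\Lambda(k)$ for $k\in\C_+$, established in Lemma~\ref{extralemma}, is indispensable. An alternative, perhaps cleaner, route would be to verify \eqref{c7} first for $\varkappa=0$ by the same Green's-function computation, and then invoke the intertwining relation \eqref{a3} together with the unitarity $U(\varkappa_0)^*=U(-\varkappa_0)$ to transport the identity to real $\varkappa$, extending to the stated complex range $ke^\varkappa\in\C_+$ by analytic continuation in $\varkappa$, since both sides of \eqref{c7} are analytic there. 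I would keep this analytic-continuation argument in reserve in case the direct complex-scaled manipulation of the jump relation becomes unwieldy.
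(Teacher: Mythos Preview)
Your approach is essentially the paper's own: define the candidate operator $T$ with kernel $R_D^\varkappa(k;x,y)+G(x,y)$, check that $Th\in\Dom H(\varkappa)$ for $h$ in a dense set by verifying \eqref{b3} and \eqref{b4}, and observe that the differential equation holds edgewise because each $\varphi_v^\varkappa$ is annihilated by $H(\varkappa)-k^2I$. The paper even computes $N_w^\varkappa(R_D^\varkappa(k)h)$ exactly as you anticipate, obtaining $\int_\cG h\,\varphi_w^\varkappa$, and then cancels it against the correction term via $\sum_v\Lambda_{uv}^{-1}\Lambda_{wv}=\delta_{uw}$.

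One point you have glossed over: checking only that $(H(\varkappa)-k^2I)T=I$ shows $T$ is a \emph{right} inverse, but $H(\varkappa)$ is not self-adjoint and you have not yet established that $k^2$ lies in its resolvent set (indeed, Theorem~\ref{thm.b2} is proved \emph{after} this theorem in the paper, using it). The paper closes this gap by also verifying the left inverse identity $T(H(\varkappa)-k^2I)=I$, and it does so economically: using the symmetries $\overline{\Lambda_{uv}(k)}=\Lambda_{uv}(-\overline{k})$ and $\overline{\varphi_v^\varkappa(x;k)}=\varphi_v^{\overline{\varkappa}}(x;-\overline{k})$ together with $H(\varkappa)^*=H(\overline{\varkappa})$, one sees that $T^*$ has the same form with parameters $(-\overline{k},\overline{\varkappa})$, so the left inverse relation reduces to the right inverse relation with these conjugate parameters. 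You should incorporate this symmetry step rather than leave the two-sidedness implicit.
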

\begin{proof}
1. Let $\wt R^\varkappa(k)$ be the operator in $L^2(\cG)$ with the
integral kernel given by
$$
\wt R^\varkappa(k;x,y) = R^\varkappa_D(k;x,y) - \sum_{u,v\in V}
\Lambda_{uv}^{-1}(k)\varphi_v^\varkappa(x;k)\varphi_u^\varkappa(y;k).
$$
We need to prove that $\wt R^\varkappa(k)$ is a bounded operator,
that it maps $L^2(\cG)$ into $\Dom H(\varkappa)$ and that the
identities
\begin{align}
(H(\varkappa)-k^2I)\wt R^\varkappa(k)&=I \label{c8}
\\
\wt R^\varkappa(k)(H(\varkappa)-k^2I)&=I \label{c9}
\end{align}
hold true. First note that since $\vf$ decays exponentially on all
leads, the boundedness of $\wt R^\varkappa(k)$ is obvious. Next,
using \eqref{c5a}, \eqref{c6a} one obtains $\wt R^\varkappa(k)^*=\wt
R^{\overline{\varkappa}}(-\overline{k})$. From here and \eqref{b5}
by taking adjoints we see that \eqref{c9} is equivalent to
$$
(H(\overline{\varkappa})-(-\overline{k})^2)\wt
R^{\overline{\varkappa}}(-\overline{k})=I
$$
which is \eqref{c8} with $-\overline{k}$, $\overline{\varkappa}$
instead of $k$, $\varkappa$. We note that $k\in\C_+$,
$ke^{\varkappa}\in\C_+$ if and only if $-\overline{k}\in\C_+$,
$-\overline{k}e^{\overline{\varkappa}}\in\C_+$. Thus, \eqref{c9}
follows from \eqref{c8}.

2. It suffices to prove that for a dense set of elements $f\in
L^2(\cG)$, the inclusion $\wt R^\varkappa(k)f\in\Dom H(\varkappa)$
and the identity
\begin{equation}
(H(\varkappa)-k^2I)\wt R^\varkappa(k)f=f \label{c10}
\end{equation}
hold true. Let $f$ be from the dense set of all continuous functions
compactly supported on $\cG$ and vanishing near all vertices of
$\cG$. Let us check that the function $g=\wt R^\varkappa(k)f$
belongs to $\Dom H(\varkappa)$. It is clear that the restriction of
$g$ onto any edge  $e$ of $\cG$ belongs to  the Sobolev space
$W^2_2(e)$. Thus, it suffices to check that $g$ belongs to $\wt
C(\cG)$ and satisfies the boundary conditions \eqref{b3} and
\eqref{b4}.

Denote $g_0=R^\varkappa_D(k)f$. Since $g_0\in\Dom H_D(\varkappa)$,
$g_0$ vanishes on all vertices. Therefore $g_0$ lies in $\wt C(\cG)$
and satisfies \eqref{b3} at every external vertex $v$. As mentioned
in Section~\ref{sec.c3}, the functions $\varphi_v^\varkappa$ also
belong to $\wt C(\cG)$ and satisfy \eqref{b3} at every external
vertex $v$. Thus, $g$ also has these properties.

Our next task is to prove that the boundary condition \eqref{b4} is
satisfied for the function $g$. Suppose that $f$ is supported on a
single edge, which we identify with $[0,\length]$. Then the integral
kernel of $R_D^\varkappa(k)$ can be explicitly calculated, which
gives
$$
g_0'(0)=\int_0^\length \frac{\sin k(\length-x)}{\sin k\length}
f(x)dx.
$$
Similarly, if $f$ is supported on a lead $[0,\infty)$, then a direct
calculation shows that
$$
g_0'(0)=e^{2\varkappa} \int_0^\infty \exp(ike^{\varkappa}x)f(x)dx.
$$
Combining this, we see that for any $w\in V^\exte$ we have
$$
N_w^\varkappa g_0 = \int_\cG f(x)\varphi_w^\varkappa(x)dx.
$$
Using the last identity and \eqref{c6}, for any $w\in V^\exte$ we
get:
$$
N_w^\varkappa g = \int_\cG f(x)\varphi_w^\varkappa(x)dx -
\sum_{u,v\in V} \Lambda^{-1}_{uv}\Lambda_{wv} \int_\cG f(x)
\varphi_u^\varkappa(x)dx =0,
$$
and so the boundary condition \eqref{b4} is satisfied for $g$. Thus,
$g\in \Dom H(\varkappa)$, as required.

3. It remains to note that the identity \eqref{c10} follows from the
fact that $R_D^\varkappa$ is the resolvent of $H_D(\varkappa)$ and
the fact that $\vf$ satisfies the equation $H(\varkappa)\vf =k^2\vf$
on every edge and lead of $\cG$.
\end{proof}

\subsection{A trace formula}\label{sec.c7}
The trace formula \eqref{c12} below results by calculating the
traces of both sides of \eqref{c7}. Since the r.h.s.\ of \eqref{c7}
is a finite rank operator, the trace is well defined; the fact that
the value of \eqref{c12} does not depend on $\varkappa$ can be
proved by complex scaling, but the direct proof is almost as easy.

The  identity \eqref{c12} below can be rephrased by saying that the
(modified) perturbation determinant of the pair of operators
$H(\varkappa)$, $H_D(\varkappa)$ equals $\det\Lambda(k)$. Statements
of this nature (for $\varkappa=0$) are well known in the theory of
boundary value problems; see e.g.\ \cite{Carron} and references
therein. The key to our proof of Theorem~\ref{thm.e1} will be
\eqref{c12} and Lemma~\ref{lma.d1}, in which $\det A(k)$ and
$\det\Lambda(k)$ are related.

\begin{theorem}\label{thm.c2}
For any $k\in\C_+$ and any $\varkappa\in\C$, such that
$ke^{\varkappa}\in\C_+$, we have
\begin{equation}
\Tr(R^\varkappa(k)-R_D^\varkappa(k)) = -\frac{\frac{d}{dk}\det
\Lambda(k)}{2k\det\Lambda(k)}. \label{c12}
\end{equation}
In particular, the l.h.s.\ is independent of $\varkappa$.
\end{theorem}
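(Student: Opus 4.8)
The plan is to compute the trace of the finite-rank operator $R^\varkappa(k)-R_D^\varkappa(k)$ directly from the explicit kernel supplied by Theorem~\ref{thm.c1}, and then to recognize the answer as a logarithmic derivative of $\det\Lambda(k)$ via Jacobi's formula. Since the right-hand side of \eqref{c7} has rank at most $\abs{V}$ and a continuous kernel, its trace equals the integral of that kernel over the diagonal $x=y$. I would therefore first record
\begin{equation*}
\Tr(R^\varkappa(k)-R_D^\varkappa(k)) = -\sum_{u,v\in V}\Lambda_{uv}^{-1}(k)\int_\cG \varphi_v^\varkappa(x;k)\,\varphi_u^\varkappa(x;k)\,dx,
\end{equation*}
the integrals converging because $\varphi_v^\varkappa$ decays exponentially on every lead under the assumption $ke^\varkappa\in\C_+$.

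The crux is the pointwise identity
\begin{equation*}
\int_\cG \varphi_v^\varkappa(x;k)\,\varphi_u^\varkappa(x;k)\,dx = \frac{1}{2k}\,\frac{d\Lambda_{uv}(k)}{dk}, \qquad u,v\in V,
\end{equation*}
which I would prove by splitting the integral into its contribution from $\cG_0$ and its contribution from the leads. On a lead the integrand is $\varphi_v(0)\varphi_u(0)\,e^{\varkappa}\exp(2ike^\varkappa x)$, and $\int_0^\infty e^\varkappa\exp(2ike^\varkappa x)\,dx = i/(2k)$: the scaling factors $e^{\varkappa/2}\cdot e^{\varkappa/2}$ cancel against the $e^{\varkappa}$ produced by integration, so the total lead contribution is the $\varkappa$-independent quantity $\tfrac{i}{2k}q(v)\delta_{uv}$. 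For the part over $\cG_0$ I would bypass the trigonometric integrals by differentiating the eigenvalue equation in $k$: setting $h=\partial_k\varphi_u$, one has $-h''-k^2h=2k\varphi_u$, and pairing this with $\varphi_v$ through Green's identity on each internal edge gives $\int_e\varphi_u\varphi_v=-\tfrac{1}{2k}\bigl[h'\varphi_v-h\varphi_v'\bigr]$ at the endpoints. Since $\varphi_u(w)=\delta_{uw}$ is independent of $k$, the function $h$ vanishes at every vertex, killing the $h\varphi_v'$ terms; summing the surviving boundary terms over the edges meeting a vertex assembles exactly $\partial_k\bigl(N_v^\inte\varphi_u\bigr)$. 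Using $\Lambda_{vu}=N_v^\inte\varphi_u+ikq(v)\delta_{uv}$ from the lead part of $N_v$, this yields $\int_{\cG_0}\varphi_u\varphi_v=\tfrac{1}{2k}\Lambda'_{uv}-\tfrac{i}{2k}q(v)\delta_{uv}$, whose spurious last term is cancelled precisely by the lead contribution. (Alternatively, the same identity follows from evaluating \eqref{c1}–\eqref{c2} against the explicit formulae \eqref{c4}–\eqref{c5}.)

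With the key identity established, I would substitute it back and invoke symmetry together with Jacobi's formula. Because $\Lambda(k)$ is symmetric by \eqref{c5a}, both $\Lambda^{-1}$ and $\Lambda'$ are symmetric, so $\sum_{u,v}\Lambda_{uv}^{-1}\Lambda'_{uv}=\Tr(\Lambda^{-1}\Lambda')$, which Jacobi's formula identifies with $(\det\Lambda)'/\det\Lambda$. Hence
\begin{equation*}
\Tr(R^\varkappa(k)-R_D^\varkappa(k))=-\frac{1}{2k}\,\Tr\!\bigl(\Lambda(k)^{-1}\Lambda'(k)\bigr)=-\frac{\frac{d}{dk}\det\Lambda(k)}{2k\det\Lambda(k)},
\end{equation*}
and the asserted independence of $\varkappa$ is immediate, as the right-hand side does not involve $\varkappa$. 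The main obstacle is the key integral identity: one must carry out the lead computation carefully enough to see the cancellation of the scaling factors (which is exactly what forces $\varkappa$-independence) and correctly bookkeep the boundary terms on $\cG_0$ so that they reassemble into $\Lambda'_{uv}$ rather than into some other combination of derivatives.
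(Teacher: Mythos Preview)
Your proof is correct and follows the same route as the paper: apply Theorem~\ref{thm.c1} to write the trace as $-\sum_{u,v}\Lambda_{uv}^{-1}\sigma_{uv}$ with $\sigma_{uv}=\int_\cG\varphi_u^\varkappa\varphi_v^\varkappa$, establish the key identity $\sigma_{uv}=\tfrac{1}{2k}\Lambda'_{uv}$, then finish with Jacobi's formula. The only difference is that the paper verifies the key identity by computing both sides explicitly from \eqref{c1}--\eqref{c2} and \eqref{c3}--\eqref{c5}, whereas your Green's-identity/Wronskian argument (differentiating $-\varphi_u''=k^2\varphi_u$ in $k$ and pairing with $\varphi_v$) is the more conceptual alternative that you yourself mention parenthetically.
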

\begin{proof}
1. Theorem~\ref{thm.c1} yields
\begin{equation}
\Tr(R^\varkappa(k)-R^\varkappa_D(k)) = -\sum_{u,v\in V}
\Lambda_{uv}^{-1}(k) \sigma_{uv}^\varkappa(k), \label{c13}
\end{equation}
where
\begin{equation}
\sigma_{uv}^\varkappa(k) = \int_\cG
\varphi^\varkappa_u(x;k)\varphi^\varkappa_v(x;k) dx. \label{c11}
\end{equation}
We next compute the coefficients $\sigma_{uv}$ explicitly. If
$v\not=u$ and $v\not\sim u$ then $\supp \varphi_v^\varkappa\cap
\supp \varphi_u^\varkappa=\varnothing$ and so $\sigma_{uv}=0$. If
$v\not=u$ and $v\sim u$ then by \eqref{c1}
\begin{multline*}
\hspace*{3em}\sigma_{uv} = \sum_{\genfrac{}{}{0pt}{}{e\in
E^\inte}{u,v\in e}} \int_0^\length \frac{\sin kx}{\sin k\length(e)}
\frac{\sin k(\length(e)-x)}{\sin k\length(e)}dx
\\
= \frac1{2k} \sum_{\genfrac{}{}{0pt}{}{e\in E^\inte}{u,v\in e}}
\frac{\sin k\length(e)-k\length(e) \cos k\length(e)}{(\sin
k\length(e))^2},\hspace*{3em}
\end{multline*}
and finally,
\begin{multline*}
\hspace*{3em}\sigma_{vv} = \sum_{\genfrac{}{}{0pt}{}{e\in
E^\inte}{v\in e}} \int_0^{\length(e)} \left(\frac{\sin kx}{\sin
k\length(e)}\right)^2 dx + q(v) \int_0^\infty
(e^{\varkappa/2}\exp(ike^{\varkappa}x))^2 dx
\\
= \frac1{2k} \sum_{\genfrac{}{}{0pt}{}{e\in E^\inte}{v\in e}}
\frac{k\length(e)-\cos k\length(e)\sin k\length(e)}{(\sin
k\length(e))^2} + \frac{i}{2k}q(v).\hspace*{3em}
\end{multline*}

2. Noting that $\sigma_{uv}$ depend on $k$ but not on $\varkappa$, a
direct calculation using \eqref{c3}--\eqref{c5} yields
$$
\frac{1}{2k}\frac{d}{dk} \Lambda_{uv}(k)=\sigma_{uv}(k).
$$
It follows that
\begin{multline*}
\hspace*{2em}\Tr(R^\varkappa(k)-R_D^\varkappa(k)) = -\sum_{u,v\in V}
\Lambda_{uv}^{-1}(k)\frac{1}{2k}\frac{d}{dk}\Lambda_{uv}(k)
\\
= - \frac{1}{2k}\Tr(\Lambda^{-1}(k)\frac{d}{dk}\Lambda(k)) =
-\frac{\frac{d}{dk}\det \Lambda(k)}{2k\det\Lambda(k)},\hspace*{2em}
\end{multline*}
as required.
\end{proof}

\section{Proof of Theorems~\ref{thm.e1} and \ref{thm.b2}}\label{sec.d}

\subsection{ Calculation of $\det A(k)$ }\label{sec.d5}
Given $k\in\C$, we define
\begin{equation}
\delta(k)=\prod_{e\in E^\inte} (k\sin k\length(e)). \label{d4}
\end{equation}
Let $A(k)$ be the matrix defined in Section~\ref{sec.d1}.
\begin{lemma}\label{lma.d1}
For any $k\in\C_+$, we have the identity
\begin{equation}
\det A(k) = \pm
\frac{2^{\abs{E^\inte}}i^{\abs{E^\inte}-\abs{V}}}{k^{\abs{E^\inte}+\abs{V}}}
\delta(k)\det\Lambda(k), \label{d6}
\end{equation}
where the sign $\pm$ depends on the ordering of the rows and columns
of the matrix $A(k)$.
\end{lemma}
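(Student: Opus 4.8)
The plan is to exhibit $\det A(k)$ as a product of two factors via a block decomposition of $A(k)$ adapted to the splitting of $\nu=(\zeta,\alpha,\beta,\gamma)^\top$ into the vertex variables $\zeta$ and the edge variables $(\alpha,\beta,\gamma)$, and then to recognise the resulting Schur complement as a scalar multiple of the Dirichlet-to-Neumann map $\Lambda(k)$. Concretely, I would order the rows of $A(k)$ so that the continuity conditions \eqref{d1} come first and the Kirchhoff conditions \eqref{d2} come last, and order the columns so that the edge variables come first and the vertex variables $\zeta$ come last. Since each condition \eqref{d2} has $y=0$, the Kirchhoff rows carry no $\zeta$-entries, so in this ordering $A(k)$ takes the block form
$$
\begin{pmatrix}
Q & P\\
S & 0
\end{pmatrix},
$$
whose determinant differs from $\det A(k)$ only by the sign of the permutation of rows and columns. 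Here $Q$ is the $(2\abs{E^\inte}+\abs{E^\exte})\times(2\abs{E^\inte}+\abs{E^\exte})$ block of continuity-row/edge-column entries, $P$ records the $\zeta$-entries of the continuity rows, and $S$ is the Kirchhoff-row/edge-column block.

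First I would check that $Q$ is invertible for $k\in\C_+$ and compute its determinant. Grouping the two continuity conditions at the endpoints of each internal edge $e$, oriented as $[0,\length(e)]$, yields the $2\times2$ diagonal block $\bigl(\begin{smallmatrix}1&1\\ z_e & z_e^{-1}\end{smallmatrix}\bigr)$ in the variables $(\alpha_e,\beta_e)$, with $z_e=e^{ik\length(e)}$, while each lead contributes a $1\times1$ block equal to $1$. Hence $Q$ is block diagonal and
$$
\det Q=\prod_{e\in E^\inte}(z_e^{-1}-z_e)=(-2i)^{\abs{E^\inte}}\prod_{e\in E^\inte}\sin k\length(e)=(-2i)^{\abs{E^\inte}}\frac{\delta(k)}{k^{\abs{E^\inte}}},
$$
which is non-zero for $k\in\C_+$ since the zeros of $\sin$ are real.

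The conceptual heart of the argument is the identification $-SQ^{-1}P=\tfrac{1}{ik}\Lambda(k)$. For $k\in\C_+$ and any $\zeta\in\C^{\abs V}$, the continuity rows $Q(\alpha,\beta,\gamma)^\top+P\zeta=0$ determine the edge variables uniquely as $(\alpha,\beta,\gamma)^\top=-Q^{-1}P\zeta$; by the discussion of Section~\ref{sec.c1} these are precisely the edge coefficients of the unique function $f=\sum_{v}\zeta_v\varphi_v\in\cM(k)$ with Dirichlet data $\zeta$. Recalling that the factor $ik$ was divided out of each Kirchhoff row, the Kirchhoff-row block evaluates to $S(\alpha,\beta,\gamma)^\top=\tfrac{1}{ik}(N_v f)_v$, and since $(N_v f)_v=\Lambda(k)\zeta$ by the definition of the Dirichlet-to-Neumann map, letting $\zeta$ range over $\C^{\abs V}$ yields $-SQ^{-1}P=\tfrac{1}{ik}\Lambda(k)$.

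Combining these, the Schur complement formula with respect to the invertible block $Q$ gives
$$
\det A(k)=\pm\det Q\cdot\det(-SQ^{-1}P)=\pm\det Q\cdot(ik)^{-\abs V}\det\Lambda(k);
$$
inserting $\det Q=(-2i)^{\abs{E^\inte}}\delta(k)/k^{\abs{E^\inte}}$ and simplifying the constant (the factor $(-1)^{\abs{E^\inte}}$ being absorbed into $\pm$) then produces exactly \eqref{d6}. The main obstacle, and the only step requiring genuine care, is the bookkeeping in the identity $-SQ^{-1}P=\tfrac{1}{ik}\Lambda(k)$: one must track the factor $ik$ removed from the Kirchhoff rows and confirm that the edge data produced by $Q^{-1}P$ really are those of $\sum_v\zeta_v\varphi_v$, so that the Neumann data read off from $S$ coincides with $\Lambda(k)\zeta$. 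All residual sign ambiguities are harmless, since \eqref{d6} is asserted only up to the sign fixed by the chosen ordering of rows and columns.
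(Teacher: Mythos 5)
Your proposal is correct and follows essentially the same route as the paper: the paper orders the blocks as $\bigl(\begin{smallmatrix}0&M\\-N&P\end{smallmatrix}\bigr)$ (Kirchhoff rows and $\zeta$-columns first) and obtains $\det A=\det P\,\det(MP^{-1}N)$ with $ik\,MP^{-1}N=\Lambda(k)$, which is exactly your Schur-complement identity $-SQ^{-1}P=(ik)^{-1}\Lambda(k)$ up to the (immaterial) choice of which corner carries the zero block. The computation of the edge-block determinant via the $2\times2$ blocks $\bigl(\begin{smallmatrix}1&1\\z_e&z_e^{-1}\end{smallmatrix}\bigr)$ and the identification of the Schur complement with the Dirichlet-to-Neumann map are the same in both arguments.
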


\begin{proof}
1. Let us order the rows and the columns of $A(k)$ in such a way
that the first $\abs{V}$ rows correspond to the conditions
$N_v(u)=0$, and the first $\abs{V}$ columns correspond to the
variables $\zeta$. Then $A(k)$ can be written in the block form as
\begin{equation}
A=
\begin{pmatrix}
0 & M
\\
-N & P
\end{pmatrix}
\label{d7}
\end{equation}
where $0$ is the $\abs{V}\times\abs{V}$ zero matrix and $P$ is a
$(2\abs{E^\inte}+\abs{E^\exte})\times(2\abs{E^\inte}+\abs{E^\exte})$
matrix. The elements of $N$ are $0$ or $1$, the elements of $M$ are
$0$, $\pm 1$, $\pm e^{\pm ik\length}$, and the elements of $P$ are
$0$, $\pm 1$, or $e^{\pm ik\length}$. For example, the matrix
\eqref{d2a} is written in this form.

2. Let us reorder the rows of $P$ in such a way that any two
constraints associated with the continuity conditions at the two
endpoints of the same edge follow one another. Let us also reorder
the columns of $P$ such that each variable $\beta_e$ follows the
corresponding variable $\alpha_e$. For example, the block $P$ of the
matrix \eqref{d2a} after such reordering will be
$$
\left(\begin{array}{ccccc}
1 & 1 & 0 & 0 & 0 \\
{z_1} & z_1^{-1} & 0 & 0 & 0 \\
0 & 0 & 1 & 1 & 0 \\
0 & 0 & z_2 & z_2^{-1} & 0 \\
0 & 0 & 0 & 0 & 1
\end{array}\right).
$$
In general, after this reordering, $P$ assumes a block-diagonal
structure with blocks either of size $2\times 2$ with elements
$$
\begin{pmatrix}
1 & 1
\\
e^{ik\length} & e^{-ik\length}
\end{pmatrix}
$$
or of size $1\times 1$ with the element $1$. From here it follows
that
\begin{equation}
\det P = \pm\prod_{e\in E^\inte}(2i\sin(k\length(e))) =
\pm(2i)^{\abs{E^\inte}}k^{-\abs{E^\inte}}\delta(k). \label{d8}
\end{equation}
In particular, since $k\in\C_+$, the matrix $P$ is invertible.

3. By applying the Schur complement method to \eqref{d7} one obtains
\begin{equation}
\det A=\det P \det(MP^{-1}N). \label{d9}
\end{equation}
Let us prove that
\begin{equation}
ik MP^{-1}N=\Lambda(k). \label{d10}
\end{equation}
Let $\zeta\in\C^{\abs{V}}$ and let $a=P^{-1}N\zeta$. The vector $a$
represents a set of parameters $\alpha$, $\beta$, $\gamma$. Let
$f\in\cL(k)$ be the solution with this set of parameters. The
equation $Pa=N\zeta$ implies that the solution $f$ is continuous on
$\cG$ and satisfies $f(v)=\zeta_v$ for any vertex $v$. Next, the
coordinates of the vector $ik MP^{-1}N\zeta=ik Ma$ are given by
$$
ik(Ma)_v=N_v f.
$$
This shows that $ikMa=\Lambda(k)\zeta$, as required.

4. By combining \eqref{d8}--\eqref{d10} one obtains
\begin{multline*}
\hspace*{3em}\det A(k) = \det P(k) \det(M(k)P^{-1}(k)N(k))
\\
=
\pm(2i)^{\abs{E^\inte}}k^{-\abs{E^\inte}}\delta(k)\det((ik)^{-1}\Lambda(k)),\hspace*{3em}
\end{multline*}
which yields \eqref{d6} immediately.
\end{proof}

\subsection{Proof of Theorem~\ref{thm.e1}}
1. Let $k\in\C_+$ and let $\chi_0$ and $\chi_\infty$ be defined as
in Section~\ref{notation}. Clearly, $\chi_0 R_D(k)\chi_0$ is an
orthogonal sum of resolvents of the operators $-d^2/dx^2$ on the
intervals $(0,\length(e))$, $e\in E^\inte$, with the Dirichlet
boundary conditions. For each such operator we have that
$(-d^2/dx^2-k^2)^{-1}$ is trace class and
\begin{multline*}
\Tr(-d^2/dx^2-k^2)^{-1} = \sum_{n=0}^\infty\left((\pi
n/\length)^2-k^2\right)^{-1}
\\
= -\frac{1}{2k^2} -\frac1{2k}\sum_{n=-\infty}^\infty \frac1{k-\pi
n/\length} = -\frac{1}{2k^2} -\frac{\length}{2k}\cot (k\length) =
-\frac{\frac{d}{dk}(k\sin(k\length))}{2k(k\sin(k\length))}.
\end{multline*}
Summing over all edges, a direct calculation shows that $\chi_0
R_D(k)\chi_0$ is a trace class operator and
\begin{equation}
\Tr(\chi_0 R_D(k)\chi_0) = -\frac{\frac{d}{dk}\delta(k)}{2k
\delta(k)}. \label{d5}
\end{equation}

2. Let $k\in\C_+$, $ke^\varkappa\in\C_+$. It is easy to see that the
resolvent $R^\varkappa_D(k)$ commutes with $\chi_0$, $\chi_\infty$
and that
$$\chi_0
R_D^\varkappa(k)\chi_0=\chi_0 R_D(k)\chi_0.
$$
Therefore we have
\begin{equation}
R^\varkappa(k)-\chi_\infty R_D^\varkappa(k)\chi_\infty =
R^\varkappa(k)- R_D^\varkappa(k) + \chi_0 R_D(k)\chi_0.
\label{resident}
\end{equation}
By combining Theorem~\ref{thm.c2} and \eqref{resident}, we obtain
\begin{equation}
\Tr(R^\varkappa(k)-\chi_\infty R_D^\varkappa(k)\chi_\infty) =
-\frac{\frac{d}{dk}\det \Lambda(k)}{2k\det\Lambda(k)}
-\frac{\frac{d}{dk}\delta(k)}{2k \delta(k)} =
-\frac{\frac{d}{dk}(\delta(k)\det \Lambda(k))}{2k
\delta(k)\det\Lambda(k)}. \label{d11}
\end{equation}
Using Lemma~\ref{lma.d1}, we then obtain
\begin{equation}
\Tr(R^\varkappa(k)-\chi_\infty R_D^\varkappa(k)\chi_\infty) =
\frac{\abs{E^\inte}+\abs{V}}{2k^2} - \frac{\frac{d}{dk}\det A(k)}{2k
\det A(k)}, \label{d3}
\end{equation}
for all $k\in\C_+$ and $ke^\varkappa\in\C_+$.

3. The r.h.s.\ of \eqref{d3} is a single-valued meromorphic function
of $k\in\C$. Let $\tau^\varkappa(k)$ be the l.h.s.\ of \eqref{d3}.
For each fixed $\varkappa\in\C$, the function $\tau^\varkappa(k)$ is
meromorphic in $\C$ with the cut along the line determined by the
condition
$k^2\in\sigma_\ess(H(\varkappa))=e^{-2\varkappa}[0,\infty)$. In
other words, $\tau^\varkappa$ is meromorphic and single-valued in
each of the two half-planes $\Im ke^{\varkappa}>0$ and $\Im
ke^{\varkappa}<0$. By the uniqueness of analytic continuation, for
each $\varkappa$ the identity \eqref{d3} extends to all $k$ such
that $\Im ke^\varkappa>0$.

4. Let $k_0\in\cR$ with the algebraic  multiplicity $m(k_0)\geq1$
and let $\theta\in(0,\pi)$ with $-\theta<\arg k_0\leq0$. Then $\Im
k_0e^{i\theta}>0$ and so the identity \eqref{d3} with
$\varkappa=i\theta$ holds for all $k$ near $k_0$. If $\gamma$ is a
sufficiently small circle with centre at $k_0$, then the
multiplicity $m(k_0)$ equals the rank, or equivalently the trace, of
the Riesz spectral projection
\begin{equation}
P^\theta(k_0) = -\frac{1}{2\pi i} \int_\gamma R^{i\theta}(k)2kdk.
\label{b6}
\end{equation}
Next, since the operator $H_D(i\theta)$ restricted to
$L^2(\cG_\infty)$ has no eigenvalues, the operator valued function
$\chi_\infty R_D^{i\theta}(k)\chi_\infty$ is analytic for $\Im
ke^{i\theta}\not=0$. It follows that
$$
-\frac{1}{2\pi i} \int_\gamma \chi_\infty
R_D^{i\theta}(k)\chi_\infty 2kdk=0.
$$
By taking the trace of the difference of the last two equations and
using \eqref{d3} we obtain
\begin{eqnarray*}
m(k_0)&=& -\frac{1}{2\pi i} \int_\gamma \Tr(R^{i\theta}(k)-\chi_\infty R_D^{i\theta}(k)\chi_\infty )2kdk\\
&=& \frac{1}{2\pi i} \int_\gamma \frac{\frac{d}{dk}\det A(k)}{\det
A(k)} dk.
\end{eqnarray*}
Therefore $m(k_0)$ equals the order of the zero of $\det A(k)$ at
$k=k_0$, as required. \qed

\subsection{Proof of Theorem~\ref{thm.b2}}\label{sec.c5}
This theorem is well known and the proof is presented here for the
sake of completeness.

1. First note that by Theorem~\ref{thm.c1}, the difference of the
resolvents of $H(\varkappa)$ and $H_D(\varkappa)$ is a finite rank
operator. By Weyl's theorem on the invariance of the essential
spectrum under a relatively  compact perturbation we obtain
$$
\sigma_\ess(H(\varkappa))=\sigma_\ess(H_D(\varkappa))=e^{-2\varkappa}[0,\infty).
$$

2. The fact  that the family $H(\varkappa)$ is analytic in the sense
of Kato follows again from Theorem~\ref{thm.c1}, since
$H_D(\varkappa)$ is analytic in the sense of Kato and each of the
functions $\vf$ is analytic in $\varkappa$.

3. The identity \eqref{a3} can be checked by a direct calculation.

4. Let $k\in\cR$ and let $f$ be the corresponding eigenfunction. For
any $\theta\in(0,\pi)$ with $-\theta<\arg k\leq0$, let $f_\theta$ be
the function defined \emph{formally} by $f_\theta=U(i\theta) f$.
More precisely, we set $f_\theta=f$ on $\cG_0$ and
\begin{equation}
f_\theta(x) = f(0)e^{i\theta/2} \exp(ike^{i\theta}x) \label{d12}
\end{equation}
for $x$ on any lead $\ell=[0,\infty)$. By the choice of $\theta$, we
have $\Im ke^{i\theta}>0$ and so $f_\theta\in L^2(\cG)$. A
straightforward inspection shows that $f_\theta\in\Dom H(i\theta)$
and $H(i\theta)f_\theta=k^2 f_\theta$.

5. Conversely, let $\lambda\not\in e^{-2i\theta}[0,\infty)$ be an
eigenvalue of $H(i\theta)$ for $\theta\in(0,\pi)$. Write
$\lambda=k^2$ with $\Im ke^{i\theta}>0$. Then, for the corresponding
eigenfunction $g$ of $H(i\theta)$ we have
$g(x)=g(0)\exp(ike^{i\theta}x)$ on any lead of $\cG$. A direct
inspection shows that $g=f_\theta$ in the same sense as \eqref{d12},
where $f$ is a resonance eigenfunction. Thus, $k\in\cR$ and in
particular, $\Im k\leq 0$. It follows that $2\pi-2\theta<\arg
k^2\leq 2\pi$. \qed

\section{An example}\label{sec.f}

Here we consider resonances of  a particular simple graph $\cG(c)$,
where $c\in[0,1]$ is a certain geometric parameter. The graph
$\cG(c)$ was also considered in \cite[Section~4]{EL2}, but with
different boundary conditions at the vertices. The graph $\cG(c)$ is
Weyl for $c<1$ and non-Weyl for $c=1$. This section has two goals.
The first one is to discuss  the transition between the Weyl and the
non-Weyl cases in order to throw new light on the failure of the
Weyl law. Our second goal is to obtain rigorous bounds on the
locations of individual resonances of $\cG(c)$, which was not
addressed in \cite{EL2}.

\subsection{Definition of $\cG(c)$}
Given $c\in[0,1)$, we consider the graph $\cG_0(c)$ which consists
of two vertices $v_1$ and $v_2$ and two edges $e_1=[0,\length_1]$,
$\length_1=(1-c)\pi$, and $e_2=[0,\length_2]$, $\length_2=(1+c)\pi$.
The vertex $v_2$ is identified with the point $0$ of $e_1$ and with
the point $0$ of $e_2$, and the vertex $v_1$ is identified with the
point $\length_1$ of $e_1$ and with the point $\length_2$ of $e_2$.
Thus, the graph $\cG_0(c)$ is simply a circle with the circumference
$\vol \cG_0(c)=2\pi$ for all $c$. We attach a lead $\ell_1$ at $v_1$
and a lead $\ell_2$ at $v_2$ and denote the thus extended graph by
$\cG(c)$. Geometrically, $\cG(c)$ is a circle with two leads
attached to it. Finally, for $c=1$, let $\cG(c)$ be the circle  of
length $2\pi$ with two leads attached at the same point.

We will denote by $H(c)$  the operator $-\frac{d^2}{dx^2}$ acting in
$L^2(\cG(c))$ subject to the usual continuity and Kirchhoff's
boundary conditions  at the vertices $v_1$ and $v_2$. By
Theorem~\ref{th1}, the graph $\cG(c)$ is Weyl if and only if $c<1$.
At the same time, the graph $\cG(1)$ can be regarded as the limit of
$\cG(c)$ as $c\to 1$ in an obvious geometric sense, so we need to
explain what happens to resonances as $c \to 1$. As we will see,
roughly speaking, half of the resonances of $H(c)$ move off to
infinity as $c\to1$. We will obtain bounds on the curves along which
the resonances move as $c$ increases from $0$ to $1$.

\subsection{The matrix $A(k,c)$ for $\cG(c)$}
Let us display the constraints \eqref{d2b} corresponding to the
graph $\cG(c)$; the matrix $A(k,c)$ will be built up of the rows
corresponding to these constraints. We denote
$z_j=e^{ik\length_j/2}$, $j=1,2$. The constraints corresponding to
the vertex $v_1$ are
\begin{align}
\alpha_1 z_1^2+\beta_1 z_1^{-2}-\zeta_1&=0 \tag{$R_1$}
\\
\alpha_2 z_2^2+\beta_2 z_2^{-2}-\zeta_1&=0 \tag{$R_2$}
\\
\gamma_1-\zeta_1&=0 \tag{$R_3$}
\\
-\alpha_1 z_1^2+\beta_1 z_1^{-2}-\alpha_2 z_2^2 + \beta_2
z_2^{-2}+\gamma_1&=0. \tag{$R_4$}
\end{align}
The first three constraints above are the continuity conditions, and
the last one is the requirement that the sum of the outgoing
derivatives vanishes. Similarly, the constraints corresponding to
the vertex $v_2$ are
\begin{align}
\alpha_1+\beta_1 -\zeta_2&=0 \tag{$R_5$}
\\
\alpha_2 +\beta_2 -\zeta_2&=0 \tag{$R_6$}
\\
\gamma_2-\zeta_2&=0 \tag{$R_7$}
\\
\alpha_1-\beta_1+\alpha_2 -\beta_2+\gamma_2&=0. \tag{$R_8$}
\end{align}
We list these constraints in the order $R_1$, $R_5$, $R_2$, $R_6$,
$R_3$, $R_7$, $R_4$, $R_8$, and order the variables as $\alpha_1$,
$\beta_1$, $\alpha_2$, $\beta_2$, $\gamma_1$, $\gamma_2$, $\zeta_1$,
$\zeta_2$. This leads to the matrix
$$
A(k,c)= \left(\begin{array}{cccccccc}
z_1^2 & z_1^{-2} & 0 & 0 & 0 & 0 & -1 & 0 \\
1 & 1 & 0 & 0 & 0 & 0 & 0 & -1 \\
0 & 0 & z_2^2 & z_2^{-2} & 0 & 0 & -1 & 0 \\
0 & 0 & 1 & 1 & 0 & 0 & 0 & -1 \\
0 & 0 & 0 & 0 & 1 & 0 & -1 & 0 \\
0 & 0 & 0 & 0 & 0 & 1 & 0 & -1 \\
-z_1^2 & z_1^{-2} & -z_2^2 & z_2^{-2} & 1 & 0 & 0 & 0 \\
1 & -1 & 1 & -1 & 0 & 1 & 0 & 0
\end{array}\right).
$$

\subsection{Calculation of $\det A(k,c)$}
The graph $\cG(c)$ has a reflection symmetry with respect to the
midpoints of $e_1$ and $e_2$. This allows to decompose the space
$\cL(k)$ into the direct sum of the subspaces corresponding to even
and odd functions with respect to this symmetry. We use this
decomposition to represent the matrix $A(k,c)$ in a block-diagonal
form where the blocks correspond to the even and odd solutions. More
precisely, let $T_1$ and $T_2$ be the matrices
\begin{align*}
T_1&= \left(\begin{array}{cccccccc}
1 & 1 & 0 & 0 & 0 & 0 & 0 & 0 \\
0 & 0 & 1 & 1 & 0 & 0 & 0 & 0 \\
0 & 0 & 0 & 0 & 1 & 1 & 0 & 0 \\
0 & 0 & 0 & 0 & 0 & 0 & 1 & 1 \\
1 & -1 & 0 & 0 & 0 & 0 & 0 & 0 \\
0 & 0 & 1 & -1 & 0 & 0 & 0 & 0 \\
0 & 0 & 0 & 0 & 1 & -1 & 0 & 0 \\
0 & 0 & 0 & 0 & 0 & 0 & 1 & -1
\end{array}\right)
\\
T_2&= \left(\begin{array}{cccccccc}
z_1^{-1} & 0 & 0 & 0 & z_1^{-1} & 0 & 0 & 0 \\
z_1 & 0 & 0 & 0 & -z_1 & 0 & 0 & 0 \\
0 & z_2^{-1} & 0 & 0 & 0 & z_2^{-1} & 0 & 0 \\
0 & z_2 & 0 & 0 & 0 & -z_2 & 0 & 0 \\
0 & 0 & 1 & 0 & 0 & 0 & 1 & 0 \\
0 & 0 & 1 & 0 & 0 & 0 & -1 & 0 \\
0 & 0 & 0 & 1 & 0 & 0 & 0 & 1 \\
0 & 0 & 0 & 1 & 0 & 0 & 0 & -1
\end{array}\right).
\end{align*}
A straightforward calculation shows that $\det T_1=\det T_2=16$.
Next, let $\wt A(k,c)=T_1 A(k,c) T_2$; the reader is invited to
check that the matrix $\wt A(k)$ can be written as
$$
\wt A = 2\begin{pmatrix} \wt A_\even & 0
\\
0 & \wt A_\odd
\end{pmatrix}
$$
with blocks
$$
\wt A_\even = \left(\begin{array}{cccc}
2C_1 & 0 & 0 & -1 \\
0 & 2C_2 & 0 & -1 \\
0 & 0 & 1 & -1 \\
-2iS_1 & -2iS_2 & 1 & 0
\end{array}\right),
\quad \wt A_\odd = \left(\begin{array}{cccc}
2iS_1 & 0 & 0 & -1 \\
0 & 2iS_2 & 0 & -1 \\
0 & 0 & 1 & -1 \\
-2C_1 & -2C_2 & 1 & 0
\end{array}\right),
$$
where we have used the notation $C_j=\cos(k\length_j/2)$,
$S_j=\sin(k\length_j/2)$, $j=1,2$. Straightforward calculations of
$\det(\wt A_\even)$ and $\det(\wt A_\odd)$ now yield
\begin{theorem}\label{detAcalc}
For all $k\in\C$ and all $c\in[0,1)$ one has
\[
\det A(k,c)=4F_\even(k,c)F_\odd(k,c)
\]
where
\begin{eqnarray*}
F_\even(k,c)&=& i\cos(kc\pi)+i\cos(k\pi)+2\sin(k\pi),\\
F_\odd(k,c)&=& i\cos(kc\pi)-i\cos(k\pi)-2\sin(k\pi).
\end{eqnarray*}
\end{theorem}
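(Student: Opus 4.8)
The plan is to reduce the stated formula to two independent $4\times 4$ determinant evaluations and then recombine them via multiplicativity of the determinant, so that almost nothing beyond elementary trigonometry is required.

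First I would exploit the factorisation $\wt A = T_1 A T_2$ together with the block-diagonal form displayed just above the theorem. Since $\det T_1 = \det T_2 = 16$, multiplicativity gives $\det\wt A = 256\,\det A(k,c)$. On the other hand, reading off the structure of the $8\times 8$ matrix $\wt A = 2\,\mathrm{diag}(\wt A_\even,\wt A_\odd)$, I would first pull out the overall scalar (so that $\det(2B)=2^8\det B$ in dimension $8$) and then split the block-diagonal determinant, obtaining $\det\wt A = 256\,\det\wt A_\even\cdot\det\wt A_\odd$. Equating the two expressions and cancelling the common factor $256$ yields the key reduction $\det A(k,c) = \det\wt A_\even\cdot\det\wt A_\odd$. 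It therefore suffices to evaluate the two $4\times 4$ blocks and show that their product equals $4F_\even F_\odd$.

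Second I would compute each block by cofactor expansion along the first column, which carries only two nonzero entries. For the even block this produces $\det\wt A_\even = 4\big(C_1C_2 - i(S_1C_2+C_1S_2)\big)$, and for the odd block one finds $\det\wt A_\odd = -4\big(S_1S_2 + i(S_1C_2+C_1S_2)\big)$; the two computations are structurally identical, the substitution $2C_j\leftrightarrow 2iS_j$ interchanging the contributions of rows $1,2$ and row $4$. Third I would pass to trigonometric variables. Writing $\theta_1 = k\length_1/2$ and $\theta_2 = k\length_2/2$, the choice $\length_1=(1-c)\pi$, $\length_2=(1+c)\pi$ gives the two crucial relations $\theta_1+\theta_2 = k\pi$ and $\theta_2-\theta_1 = kc\pi$. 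The product-to-sum identities then turn $C_1C_2$, $S_1S_2$ and $S_1C_2+C_1S_2$ into $\tfrac12(\cos kc\pi + \cos k\pi)$, $\tfrac12(\cos kc\pi - \cos k\pi)$ and $\sin k\pi$ respectively. Substituting, I would recognise $\det\wt A_\even = -2i\,F_\even$ and $\det\wt A_\odd = 2i\,F_\odd$, whence $\det\wt A_\even\cdot\det\wt A_\odd = (-2i)(2i)F_\even F_\odd = 4F_\even F_\odd$, which is the claim.

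I expect no genuine obstacle here: every quantity is an entire (in fact trigonometric) function of $k$, so the identity is a polynomial-type identity valid for all $k\in\C$ and all $c\in[0,1)$, with no convergence, domain or branch issues. The only point demanding real care is the bookkeeping of the factors of $i$ and the signs: each block determinant carries a factor $\pm 2i$, and it is precisely the product $(-2i)(2i)=4$ that reproduces the constant $4$ in the statement. One must therefore track these prefactors faithfully rather than absorbing them into an unspecified sign, since an error there would corrupt the constant and swap the roles of $F_\even$ and $F_\odd$.
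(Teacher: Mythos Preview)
Your proposal is correct and follows exactly the approach sketched in the paper: the paper sets up the block-diagonalisation $\wt A=T_1AT_2$ and then simply says ``Straightforward calculations of $\det(\wt A_\even)$ and $\det(\wt A_\odd)$ now yield'' the theorem; you have supplied precisely those omitted calculations, including the clean bookkeeping $\det A=\det\wt A_\even\cdot\det\wt A_\odd$ from $256=16\cdot 16=2^8$ and the identification $\det\wt A_\even=-2iF_\even$, $\det\wt A_\odd=2iF_\odd$.
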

We will call the zeros of $F_\even(\cdot,c)$ (resp. of
$F_\odd(\cdot,c)$) the even (resp. odd) resonances. It is not
difficult to check that the resonance eigenfunctions which
correspond to even/odd resonances are even/odd with respect to the
symmetry of the graph $\cG(c)$. By Theorem~\ref{thm.b3}, the real
even/odd resonances are actually eigenvalues of $H(c)$ and therefore
we will call them even/odd eigenvalues.

Finally, it is not difficult to check that the resonances of $H(1)$
are given, as expected, by the zeros of $\det A(k,1)$. In fact, in
this case we have $F_\odd(k,1)=-2\sin(k\pi)$ and
\begin{equation}
F_\even(k,1)=2ie^{-ik\pi}\not=0 \quad \forall k\in\C. \label{d13}
\end{equation}
Thus, the resonances of $H(1)$ coincide with the solutions to
$\sin(k\pi)=0$, i.e.\ they are given by $k\in\Z$. By
Theorem~\ref{thm.b3}, these resonances (for $k\not=0$) coincide with
the eigenvalues of $H(1)$ and all of them have the multiplicity one.
This shows that for $c=1$ we have the asymptotics \eqref{a4} with
$W=\pi=\tfrac12\vol\cG_0$.

\subsection{Locating the odd resonances}

\begin{theorem}\label{thm.f1}
\begin{enumerate}[\rm (i)]
\item
For any $c\in[0,1]$ , any $n\in\Z$ and any $y\geq0$ one has
\linebreak $F_\odd(n+\frac12-iy,c)\not=0$.
\item
For any $c\in[0,1]$ and any $k=x-iy$ with $y>\abs{x}/\sqrt{3}$ one
has $F_\odd(k,c)\not=0$.
\end{enumerate}
\end{theorem}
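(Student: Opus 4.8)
The plan is to work throughout with the explicit formula for $F_\odd$ from Theorem~\ref{detAcalc}. Writing the cosines and sine in exponentials gives
\[
F_\odd(k,c)=\tfrac{i}{2}\bigl(e^{ik\pi}+e^{ikc\pi}+e^{-ikc\pi}-3e^{-ik\pi}\bigr),
\]
so both parts reduce to showing that the four-term exponential sum $\Sigma(k,c):=e^{ik\pi}+e^{ikc\pi}+e^{-ikc\pi}-3e^{-ik\pi}$ has no zeros in the relevant region. For $k=x-iy$ the four summands have moduli $e^{y\pi}$, $e^{yc\pi}$, $e^{-yc\pi}$ and $3e^{-y\pi}$; since $0\le c\le1$ the term $e^{ik\pi}$ dominates when $y>0$, but for $c$ near $1$ the term $e^{ikc\pi}$ is almost as large, and this near-degeneracy is what makes the estimate delicate.

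Part (i) is a clean direct computation. Substituting $k=n+\tfrac12-iy$ and using $\cos((n+\tfrac12)\pi)=0$, $\sin((n+\tfrac12)\pi)=(-1)^n$ gives $\cos(k\pi)=(-1)^n i\sinh(y\pi)$ and $\sin(k\pi)=(-1)^n\cosh(y\pi)$, so that
\[
-i\cos(k\pi)-2\sin(k\pi)=(-1)^n\bigl(\sinh(y\pi)-2\cosh(y\pi)\bigr)
\]
is a real number of modulus $\tfrac12 e^{y\pi}+\tfrac32 e^{-y\pi}$. On the other hand $\abs{i\cos(kc\pi)}=\abs{\cos(kc\pi)}\le\cosh(yc\pi)\le\cosh(y\pi)=\tfrac12 e^{y\pi}+\tfrac12 e^{-y\pi}$, where the first step is the identity $\abs{\cos(a-ib)}^2=\cos^2 a+\sinh^2 b$ and the second uses $c\le1$, $y\ge0$. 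Since the real term strictly dominates, the reverse triangle inequality gives $\abs{F_\odd(n+\tfrac12-iy,c)}\ge e^{-y\pi}>0$, which proves (i).

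Part (ii) is the hard part, and its entire content is to convert the constraint $y>\abs{x}/\sqrt3$ into phase information that forbids cancellation. Two features defeat a crude triangle inequality: near $k=0$ the summands are close to $1,1,1,-3$ and their constant parts cancel (indeed $F_\odd(0,c)=0$ and $F_\odd(k,c)\sim-2\pi k$), while for $c$ near $1$ the two leading terms $e^{ik\pi}$ and $e^{ikc\pi}$ have almost equal modulus. It is convenient that inside the cone $\abs{k}^2=x^2+y^2<4y^2$, so small $\abs{k}$ and small $y$ coincide and I may organise the proof by the size of $y$. For small $\abs{k}$ I would use the cancellation of constants, writing
\[
\Sigma=(e^{ik\pi}-1)+(e^{ikc\pi}-1)+(e^{-ikc\pi}-1)-3(e^{-ik\pi}-1),
\]
whose linear part is $4\pi i k$, and bounding the quadratic remainders of each $e^{w}-1$ to conclude $\Sigma\ne0$. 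For $y$ bounded below I would pair the two leading terms and use
\[
\abs{e^{ik\pi}+e^{ikc\pi}}^2=e^{2y\pi}+e^{2yc\pi}+2e^{y\pi(1+c)}\cos\phi,\qquad\phi:=x\pi(1-c),
\]
comparing it with the remaining moduli $e^{-yc\pi}+3e^{-y\pi}$. The cone enters through $\abs{\phi}<\sqrt3\,y\pi(1-c)=:\sqrt3\,\rho$. When $\rho\ge\pi/\sqrt3$ the modulus gap $e^{yc\pi}(e^{\rho}-1)$ alone dominates, even with $\cos\phi=-1$; when $\rho<\pi/\sqrt3$ one has $\sqrt3\,\rho<\pi$, hence $\cos\phi>\cos(\sqrt3\,\rho)$, and this lower bound on the cross term keeps $\abs{e^{ik\pi}+e^{ikc\pi}}$ above the lower-order terms. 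The constant $\sqrt3$ is exactly the value that makes this balance between phase misalignment and modulus gap succeed.

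The step I expect to be the main obstacle is the quantitative estimate in (ii) for the delicate range $\rho<\pi/\sqrt3$: after the reductions above everything collapses to a single real inequality in $y$, $c$ and $\phi$ subject to $\abs{\phi}<\sqrt3\,\rho$, and the difficulty lies entirely in verifying that inequality uniformly, with no conceptual ingredient beyond the elementary identities already used. Patching the small-$\abs{k}$ estimate to the bounded-$y$ estimate with a single threshold, uniformly in $c\in[0,1]$, is the bookkeeping that must be done carefully.
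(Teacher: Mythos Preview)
Your argument for (i) is essentially the paper's: the same decomposition, the same bound $|\cos(kc\pi)|\le\cosh(y\pi)$, and the same conclusion $|F_\odd(n+\tfrac12-iy,c)|\ge e^{-y\pi}>0$.

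For (ii) your route diverges from the paper's and is left incomplete at precisely the step you flag as the obstacle. The paper avoids your case analysis entirely by a different grouping of the terms. Rather than pairing the two dominant exponentials $e^{ik\pi}$ and $e^{ikc\pi}$, it writes
\[
|F_\odd(k,c)|\ge 2|\sin(k\pi)|-|\cos(k\pi)-\cos(kc\pi)|,
\]
and then handles the difference of cosines by the integral representation
\[
\cos(k\pi)-\cos(kc\pi)=-\int_c^1 k\pi\,\sin(k\pi s)\,ds.
\]
Combined with the elementary bounds $\sinh(y\pi)\le|\sin(k\pi)|$ and $|\sin(k\pi s)|\le\cosh(y\pi s)$, this yields in one stroke
\[
|F_\odd(k,c)|\ge\Bigl(2-\frac{|k|}{y}\Bigr)\sinh(y\pi),
\]
which is positive exactly when $|k|<2y$, i.e.\ $y>|x|/\sqrt{3}$. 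No small-$k$ regime, no threshold in $\rho$, no phase estimate: the integral absorbs the $c$-dependence uniformly and the factor $\sinh(y\pi)$ takes care of the vanishing at $k=0$ automatically.

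The contrast is instructive. Your pairing puts the two largest terms together, which for $c$ near $1$ have nearly equal modulus, forcing you into the phase analysis $\cos\phi>\cos(\sqrt3\,\rho)$ and the small-$k$/large-$y$ patching you describe. The paper's pairing puts one large and one small exponential together to form $\sin(k\pi)$, whose modulus is controlled from below, and the remaining two cosines enter only through their \emph{difference}, which is small uniformly in $c$ because the integration interval $[c,1]$ shrinks as $c\to1$. Your approach may well be completable, but as it stands the central inequality in the range $\rho<\pi/\sqrt3$ is asserted rather than proved, and the patching threshold is not exhibited; the paper's argument needs neither.
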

\begin{proof}
(i) By an explicit calculation,
$$
F_\odd(n+\tfrac12-iy,c ) = i\cos((n+\tfrac12-iy)\pi c) +
(-1)^n\sinh(y\pi) -2(-1)^n\cosh(y\pi)=A+B,
$$
where
\begin{align*}
|A|&= |\cos((n+1/2-iy)\pi c )|\\
&=|\cos((n+1/2)\pi c )\cosh(y\pi c )
+i\sin((n+1/2)\pi c )\sinh(y\pi c )|\\
&\leq \cosh(y\pi c ) \leq  \cosh(y\pi)
\end{align*}
and
\[
|B|=2\cosh(y\pi)-\sinh(y\pi)=\cosh(y\pi)+\rme^{-y\pi}.
\]
We deduce that
\[
|F_\odd(n+\tfrac12-iy,c )|\,\geq \,|B|-|A| \,\geq \, \rme^{-y\pi}\,
>\, 0.
\]
(ii) We start by observing that $|F_\odd(k,c)|\geq 2A-B$ where
\begin{eqnarray*}
A&=&|\sin(k\pi)|\\
B&=&|\cos(k\pi)-\cos(k\pi c)| =\left|\int_c^1 k\pi \sin(k\pi
s)\,\rmd s \right|.
\end{eqnarray*}
If $u\in \R$ and $v\geq 0$ then
\[
\sin(u-iv)=\sin(u)\cosh(v)-i\cos(u)\sinh(v).
\]
Therefore
\[
\sinh(v)\leq |\sin(u-iv)|\leq \cosh(v).
\]
We deduce that $A\geq \sinh(y\pi)$ and
$$
B\leq \int_c^1 |k|\pi \cosh(y\pi s)\,\rmd s =\frac{|k|}{y} \left(
\sinh(y\pi)-\sinh(y\pi s)\right) \leq \frac{|k|}{y} \sinh(y\pi).
$$
These bounds imply that $2A-B>0$ if $2y>|k|$, which yields the
theorem immediately.
\end{proof}
It follows that all odd resonances are located in the rectangles
$$
\Pi^\odd_n=\left\{x-iy: \abs{x-n}<\tfrac12, \ 0\leq y\leq
\tfrac{2\abs{n}+1}{2\sqrt{3}} \right\}, \quad n\in \Z.
$$
The following statement, in combination with Rouchet's theorem,
shows that each of the rectangles $\Pi^\odd_n$ contains exactly one
odd resonance of algebraic multiplicity one for all $c\in[0,1]$.

\begin{theorem}\label{cequals0}
If $c =0$ there is a resonance of algebraic multiplicity one at
$k=n-i\log(3)/\pi$ for every odd $n\in\Z$ and an eigenvalue of
multiplicity one at $k=n$ for every non-zero even $n\in\Z$. There is
also a resonance of algebraic multiplicity one at $k=0$. No other
odd resonances or eigenvalues exist if $c =0$.
\end{theorem}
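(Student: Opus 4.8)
The plan is to set $c=0$ in the formulas of Theorem~\ref{detAcalc} and determine exactly the zeros of $F_\even(\cdot,0)$ and $F_\odd(\cdot,0)$. At $c=0$ we have $\cos(kc\pi)=\cos(0)=1$, so the two functions simplify to
\begin{align*}
F_\even(k,0)&=i+i\cos(k\pi)+2\sin(k\pi),\\
F_\odd(k,0)&=i-i\cos(k\pi)-2\sin(k\pi).
\end{align*}
Since resonances are the zeros of $\det A(k,0)=4F_\even F_\odd$ counted with multiplicity, and recalling from Theorem~\ref{thm.b3} that the real resonances are precisely the eigenvalues of $H(0)$, it suffices to solve each trigonometric equation in $\C$ and verify the orders of the zeros. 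I would handle the even and odd factors separately.

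For the odd resonances I would first note that part~(i) of Theorem~\ref{thm.f1} already confines them to the rectangles $\Pi_n^\odd$, and the preceding remark (via Rouch\'e) guarantees exactly one odd resonance in each $\Pi_n^\odd$. So I only need to exhibit one zero of $F_\odd(\cdot,0)$ inside each rectangle and check it has multiplicity one. For $k=n-i\log(3)/\pi$ with $n$ odd, one computes $\cos(k\pi)=\cos(n\pi)\cosh(\log 3)=-\cosh(\log 3)=-2$ (using $\cosh\log 3=(3+1/3)/2=5/3$---here I would double-check this arithmetic) and $\sin(k\pi)=-i\sin(n\pi)\sinh(\log 3)=-i\sinh(\log 3)$, and substitute to confirm $F_\odd=0$; for even $n$ one checks directly that $k=n$ gives $F_\odd(n,0)=i-i=0$. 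To confirm multiplicity one, I would differentiate $F_\odd$ and verify the derivative does not vanish at these points, or alternatively invoke the Rouch\'e-based uniqueness already established to conclude that each zero in $\Pi_n^\odd$ is simple.

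For the even resonances the statement claims \emph{none} exist away from $k=0$ (together with a single simple resonance at $k=0$, which I attribute to the even factor). I would solve $F_\even(k,0)=0$, i.e.\ $i(1+\cos k\pi)+2\sin k\pi=0$. Writing $1+\cos k\pi=2\cos^2(k\pi/2)$ and $\sin k\pi=2\sin(k\pi/2)\cos(k\pi/2)$, this factors as $2\cos(k\pi/2)\bigl(i\cos(k\pi/2)+2\sin(k\pi/2)\bigr)=0$. The factor $\cos(k\pi/2)=0$ gives $k$ odd; the factor $i\cos(k\pi/2)+2\sin(k\pi/2)=0$ gives $\tan(k\pi/2)=-i/2$, whose solutions lie off the real axis at $k=2n-i\log(3)/\pi$ for integer $n$. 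The main care here is bookkeeping: I must check whether these even-factor zeros produce genuinely \emph{new} resonances or whether they coincide with, and are already accounted for by, the odd resonances listed above. I expect they in fact coincide pairwise (the even zeros at odd $k$ matching the odd resonances there, and the even zeros at $k=2n-i\log 3/\pi$ matching odd resonances at even $n$, after using the symmetry $k\mapsto-\overline k$ of $\cR$), so that no additional points appear beyond those enumerated, except for the single point $k=0$.

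The step I expect to be the main obstacle is precisely this reconciliation of the even and odd zero sets together with the correct multiplicity count at $k=0$. One must verify that $k=0$ is a zero of $\det A(k,0)$ of total order exactly one (so that it contributes a single resonance of algebraic multiplicity one), and that every nonzero zero of $F_\even$ is already listed among the zeros of $F_\odd$, so that the phrase ``no other odd resonances or eigenvalues exist'' accurately describes the full zero set. I would carry out a small-$k$ expansion of $F_\even(k,0)F_\odd(k,0)$ near the origin to read off the order of vanishing at $k=0$, and then cross-check the complete list against the rectangle-confinement of Theorem~\ref{thm.f1}(ii), which forces all resonances into the region $y\le|x|/\sqrt3$ and thereby rules out spurious zeros far from the real axis.
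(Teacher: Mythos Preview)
You have misread what the theorem is asserting. It sits in the subsection on \emph{odd} resonances and concerns only the zeros of $F_\odd(\cdot,0)$; the phrase ``no other odd resonances or eigenvalues'' means no other zeros of $F_\odd$. The even factor is handled separately and in parallel by Theorem~\ref{cequals02}. Consequently your entire ``reconciliation'' programme---matching zeros of $F_\even$ with zeros of $F_\odd$---is unnecessary, and in fact the expectation you state is false: the two zero sets are disjoint. For instance, at an odd integer $n$ one has $F_\odd(n,0)=i-i(-1)-0=2i\neq0$, and at $k=2n-i\log 3/\pi$ a short computation gives $F_\odd(k,0)=2i\neq0$ as well. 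So the even zeros are genuinely new resonances, not duplicates; they simply are not the subject of this theorem.

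There is also a circularity in your use of Rouch\'e: in the paper the Rouch\'e argument (one simple zero per rectangle $\Pi^\odd_n$ for all $c$) takes Theorem~\ref{cequals0} as its \emph{input} at $c=0$, not the other way round. You therefore cannot invoke it to prove the theorem.

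The paper's own proof is a single line: write $F_\odd(k,0)$ in exponentials and factor,
\[
F_\odd(k,0)=\tfrac{i}{2}\bigl(e^{ik\pi}+3\bigr)\bigl(1-e^{-ik\pi}\bigr).
\]
The first factor vanishes exactly when $e^{ik\pi}=-3$, i.e.\ $k=n-i\log 3/\pi$ with $n$ odd; the second vanishes exactly when $e^{ik\pi}=1$, i.e.\ $k\in\Z$ even (including $k=0$). The two factors share no common zero, and each has only simple zeros, so every listed zero has multiplicity one. That is the whole argument; no rectangle confinement, Rouch\'e, or derivative check is needed. (Your flagged arithmetic slip---$\cosh\log 3=5/3$, not $2$---would disappear under this approach, since no hyperbolic evaluations are required.)
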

The proof follows from the explicit formula
$$
F_\odd(k,0)=\frac{i}2(e^{ik\pi} +3)(1-e^{-ik\pi}).
$$
By the implicit function theorem, we obtain that each of the zeros
of $F_\odd(\cdot;c)$ is a real analytic function of $c\in[0,1]$ with
values in $\Pi^\odd_n$. The set of all odd resonances is therefore
the union of a sequence of bounded real analytic curves.

It is interesting to note that each of these resonance curves
intersects the real axis, thereby (by Theorem~\ref{thm.b3}) giving
rise to embedded eigenvalues. This happens at rational values of
$c$. More precisely, a direct computation shows that $F_\odd(k,c)=0$
for $k\in\R$ if and only if
$$
k=m+n, \quad c=\frac{m-n}{m+n} \quad \text{for some $m,n\in\N$.}
$$

Figure~\ref{figure1} plots a typical odd resonance curve as $c$ increases from $0$ to $1$. The curve starts at $7-i\log(3)/\pi$ when $c=0$ and passes through $7$ when $c=\frac{1}{7},\, \frac{3}{7},\, \frac{5}{7},\, 1$.

\begin{figure}[h!]
\begin{center}\hspace*{-2em}\includegraphics[width=6in]{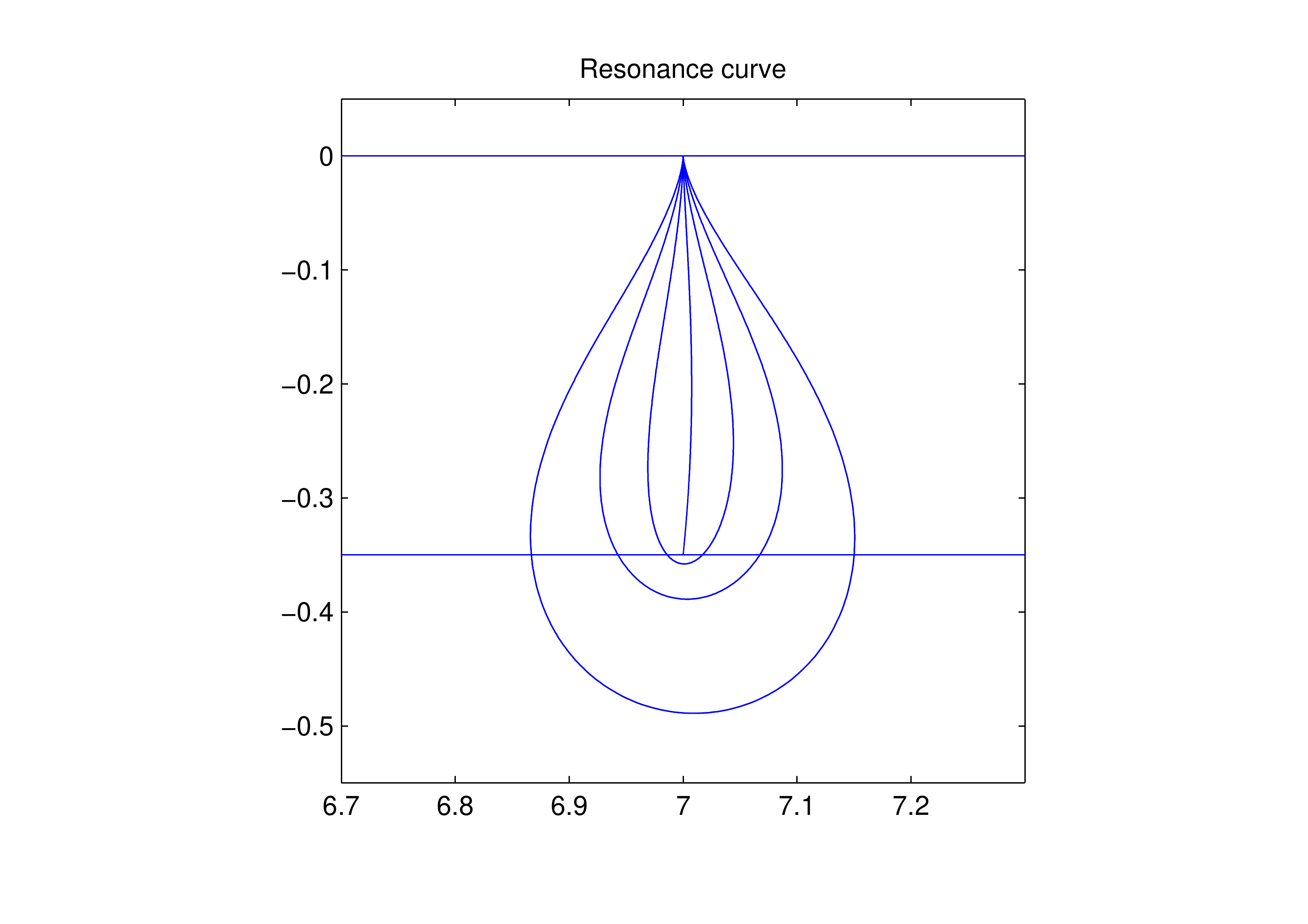}
\end{center}
\vspace*{-4ex} \caption{The odd resonance curve in $\Pi^\odd_7$}
\label{figure1}
\end{figure}

\subsection{Locating the even resonances}
\begin{theorem}\label{thm.f2}
\begin{enumerate}[\rm (i)]
\item
For any $c\in[0,1]$, any $n\in\Z$ and  any $y\geq0$ one has
\linebreak $F_\even(n+\frac12-iy,c)\not=0$.
\item
For any $c\in[0,1)$ and any $k=x-iy$ with $y>\frac{\log
3}{\pi(1-\abs{c})}$, one has $F_\odd(k,c)\not=0$.
\end{enumerate}
\end{theorem}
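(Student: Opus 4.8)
The plan is to derive part~(i) by the same $A+B$ splitting used in the proof of Theorem~\ref{thm.f1}(i), and to derive part~(ii) by passing to the exponential variable $w=e^{ik\pi}$ and isolating the exponentially dominant term.

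For part~(i) I would evaluate $F_\even$ at $k=n+\tfrac12-iy$ and write it as $A+B$ with $A=i\cos(kc\pi)$ and $B=i\cos(k\pi)+2\sin(k\pi)$. Using $e^{\pm i(n+1/2)\pi}=\pm(-1)^n i$, a direct computation gives $B=(-1)^n\bigl(\cosh(y\pi)+e^{-y\pi}\bigr)$, so that $\abs{B}=\cosh(y\pi)+e^{-y\pi}$. The estimate for $\abs{A}$ is exactly the one already used for $F_\odd$: expanding $\cos((n+\tfrac12)\pi c-iy\pi c)$ into real and imaginary parts yields $\abs{A}\le\cosh(y\pi c)\le\cosh(y\pi)$. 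Hence $\abs{F_\even(n+\tfrac12-iy,c)}\ge\abs{B}-\abs{A}\ge e^{-y\pi}>0$, which is part~(i).

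For part~(ii) the key preliminary step is an exponential rewriting. With $w=e^{ik\pi}$, so that $\abs{w}=e^{y\pi}=:t\ge1$ for $k=x-iy$, $y\ge0$, one checks that $i\cos(k\pi)+2\sin(k\pi)=\tfrac{i}{2}(3w^{-1}-w)$ and $i\cos(kc\pi)=\tfrac{i}{2}(w^{c}+w^{-c})$, whence $F_\even=\tfrac{i}{2}\bigl(w^{c}+w^{-c}+3w^{-1}-w\bigr)$ and $F_\odd=\tfrac{i}{2}\bigl(w^{c}+w^{-c}-3w^{-1}+w\bigr)$. Since $\abs{w}=t$, $\abs{w^{-1}}=1/t$ and $\abs{w^{\pm c}}=t^{\pm c}$, the two triangle inequalities $\abs{w-3w^{-1}}\ge t-3/t$ and $\abs{w^{c}+w^{-c}}\le t^{c}+t^{-c}$ give the single bound
\[
\abs{F_\even(k,c)}\ \ge\ \tfrac12\bigl(\abs{w-3w^{-1}}-\abs{w^{c}+w^{-c}}\bigr)\ \ge\ \tfrac12\,h(t),
\]
where $h(t):=t-\tfrac{3}{t}-t^{c}-t^{-c}$; the very same estimate holds for $\abs{F_\odd(k,c)}$, so the argument delivers both the displayed $F_\odd$ statement and the $F_\even$ bound needed to confine the even resonances.

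It then remains to show $h(t)>0$ under the hypothesis, which is equivalent to $t^{1-c}>3$, i.e.\ $t>t_0:=3^{1/(1-c)}$. I would first note that $h$ is strictly increasing on $[1,\infty)$: $h'(t)=1+3t^{-2}-ct^{c-1}+ct^{-c-1}\ge 1-c>0$ for $t\ge1$, $0\le c<1$. At the threshold, $t_0^{\,c}=t_0/3$ and $t_0^{-c}=3/t_0$, so $h(t_0)=\tfrac{2t_0}{3}-\tfrac{6}{t_0}=\tfrac{2}{3t_0}(t_0^2-9)\ge0$, because $t_0=3^{1/(1-c)}\ge3$ for $c\in[0,1)$. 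Monotonicity then yields $h(t)>h(t_0)\ge0$ for $t>t_0$, hence $\abs{F_\even}\ge\tfrac12 h(t)>0$. The main obstacle is precisely this matching of the crude triangle-inequality bound to the exact stated threshold: the bound degenerates to an equality exactly at $c=0$, $t=3$, so strictness of the conclusion must be recovered from the strict inequality in the hypothesis together with the strict monotonicity of $h$, which is why the threshold $y>\frac{\log 3}{\pi(1-\abs{c})}$ is the sharp one and blows up as $c\to1$, reflecting the escape of the even resonances to infinity.
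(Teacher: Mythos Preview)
Your proof is correct and takes essentially the same approach as the paper. Part~(i) is the paper's argument verbatim (the paper writes $F_\even=A-B$ with $A,B$ literally the quantities from Theorem~\ref{thm.f1}(i), which is your splitting up to the sign of $B$); for part~(ii) the paper likewise separates $\cos(k\pi c)$ from $i\cos(k\pi)+2\sin(k\pi)$ and bounds them by $\tfrac12 e^{y\pi|c|}+\tfrac12$ and $\tfrac12 e^{y\pi}-\tfrac32 e^{-y\pi}$---in your variable $t=e^{y\pi}$ these are $\tfrac12(t^{c}+1)$ and $\tfrac12(t-3/t)$, essentially your estimates---and then, assuming a zero, runs a short inequality chain to force $y\le\frac{\log 3}{\pi(1-|c|)}$, whereas you reach the same threshold via monotonicity of the auxiliary function $h$; the difference is purely organizational, though your version has the minor bonus of applying verbatim to both $F_\even$ and $F_\odd$, neatly covering the evident typo in the statement of~(ii).
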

\begin{proof}
(i) We have
$$
F_\even(n+1/2-iy,c )=A-B,
$$
where $A$, $B$ are as in the proof of Theorem~\ref{thm.f1}(i). The
rest of the proof is the same as in Theorem~\ref{thm.f1}(i).

(ii) For any $k=x-iy$ we have
\begin{align}
\frac{1}{2}\rme^{y\pi|c |}+\frac{1}{2} &\geq \cosh(y\pi c ) \geq
\left| \cos(x\pi c )\cosh(y\pi c )+i\sin(x\pi c )\cosh(y\pi c
)\right| \notag
\\
&\geq \left| \cos(x\pi c )\cosh(y\pi c )+i\sin(x\pi c )\sinh(y\pi c
)\right| =|\cos(k\pi c )| \label{f1}
\end{align}
and
\begin{equation}
|i\cos(k\pi)+2\sin(k\pi)| \geq\frac{1}{2}\left|\rme^{ik\pi}\right|
-\frac{3}{2}\left|\rme^{-ik\pi}\right| =\frac{1}{2}\rme^{y\pi}
-\frac{3}{2}\rme^{-y\pi}. \label{f2}
\end{equation}
Now suppose $F_\even(k,c)=0$; then $\cos(k\pi
c)=-i\cos(k\pi)-2\sin(k\pi)$ and therefore, combining \eqref{f1} and
\eqref{f2}, we obtain
\[
\rme^{y\pi} \leq \rme^{y\pi|c |}+1+3\rme^{-y\pi}.
\]
If $y\geq \log(3)/\pi$ or equivalently $\rme^{y\pi}\geq 3$ then
\[
\rme^{y\pi}\leq \rme^{y\pi|c |}+2\leq \rme^{y\pi|c
|}+\frac{2}{3}\rme^{y\pi}.
\]
A simple manipulation then yields that
$y\leq\frac{\log(3)}{\pi(1-\abs{c})}$, and the required result
follows.
\end{proof}
It follows that for $c\in[0,1)$ the even resonances are located in
the rectangles
$$
\Pi^\even_n(c) = \left\{x+iy: \abs{x-n}<\tfrac12,\ 0\leq y\leq
\tfrac{\log 3}{\pi(1-\abs{c})}\right\}.
$$
Just as in the odd case, the following statement shows that for each
$n\in\Z$ and $c\in[0,1)$, the rectangle $\Pi^\even_n(c)$ contains
exactly one resonance.

\begin{theorem}\label{cequals02}
If $c =0$ there is an even resonance of the algebraic multiplicity
one at $k=n-i\log(3)/\pi$ for every even $n\in\Z$ and an even
eigenvalue of multiplicity one at $k=n$ for every non-zero odd
$n\in\Z$. There are no other even resonances.
\end{theorem}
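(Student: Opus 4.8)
The plan is to imitate the one-line argument behind Theorem~\ref{cequals0}: produce a closed factorization of $F_\even(\cdot,0)$ into elementary exponential factors and then read off its zeros and their orders directly. First I would set $c=0$ in the formula of Theorem~\ref{detAcalc}; since $\cos(kc\pi)=1$ there, this gives $F_\even(k,0)=i+i\cos(k\pi)+2\sin(k\pi)$. Writing $\cos$ and $\sin$ through $e^{\pm ik\pi}$ and collecting terms turns the right-hand side into a Laurent polynomial in $w=e^{ik\pi}$; after multiplying by $w$ one obtains the quadratic $-(w-3)(w+1)$, which yields
\[
F_\even(k,0)=-\tfrac{i}{2}\,(e^{ik\pi}-3)(1+e^{-ik\pi}).
\]
This is the exact even-case analogue of the factorization $F_\odd(k,0)=\tfrac{i}{2}(e^{ik\pi}+3)(1-e^{-ik\pi})$ quoted for Theorem~\ref{cequals0}.

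Next I would solve each factor separately. The factor $e^{ik\pi}-3$ vanishes exactly when $e^{ik\pi}=3$, i.e.\ $k=2m-i\log(3)/\pi$ with $m\in\Z$; these are the points $k=n-i\log(3)/\pi$ with $n$ even. The factor $1+e^{-ik\pi}$ vanishes exactly when $e^{-ik\pi}=-1$, i.e.\ when $k$ is an odd integer. Since each factor has only simple zeros and the two zero sets are disjoint (one lies on the line $\Im k=-\log(3)/\pi$, the other on the real axis), every zero of $F_\even(\cdot,0)$ is simple, and these two families exhaust the zeros. This already gives the location statement and shows there are no other even resonances.

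It remains to pass from ``order-one zero of $F_\even$'' to the assertions about algebraic multiplicity. Here I would invoke Theorem~\ref{thm.e1} together with $\det A(k,0)=4F_\even(k,0)F_\odd(k,0)$ from Theorem~\ref{detAcalc}. Comparing the zero set just computed with that of $F_\odd(\cdot,0)$ from Theorem~\ref{cequals0} shows that the even and odd families are disjoint at $c=0$, so at each even zero the order of $\det A$ equals the order of $F_\even$, namely one; hence each even resonance has algebraic multiplicity one. For the real zeros $k=n$ with $n$ odd, Theorem~\ref{thm.b3} identifies them as eigenvalues of $H(0)$ of the same unit multiplicity, giving the ``even eigenvalue'' phrasing. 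The computation itself is entirely routine; the only point requiring care is this multiplicity bookkeeping, since algebraic multiplicity is defined through $\det A$ rather than through the factor $F_\even$ in isolation, and it is the disjointness of the even and odd zero sets that lets one attribute the full order of the $\det A$ zero to $F_\even$.
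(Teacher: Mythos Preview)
Your approach is correct and is exactly the paper's: the paper's entire proof is the single displayed factorization $F_\even(k,0)=-\tfrac{i}{2}(e^{ik\pi}-3)(1+e^{-ik\pi})$, from which the claimed zero set and simplicity are read off. Your additional paragraph on multiplicity bookkeeping (disjointness of the even and odd zero sets, so that the order of $\det A$ at an even zero equals the order of $F_\even$) is more careful than what the paper actually writes, but is in the spirit of how ``algebraic multiplicity'' is being used throughout Section~\ref{sec.f}.
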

The proof follows from the explicit formula
$$
F_\even(k,0)=-\frac{i}2(e^{ik\pi} -3)(1+e^{-ik\pi}).
$$
Just as in the odd case, we obtain that the resonances are given by
branches of real analytic functions of $c\in[0,1)$ with values in
$\Pi^\even_n(c)$. However, in contrast with the odd case, the height
of the rectangles $\Pi^\even_n(c)$ is not uniformly bounded in $c$.
Moreover, we have

\begin{theorem}
Let $n\in\Z$ and let $k_n=k_n(c)$ be the unique solution to
$F_\even(k,c)=0$ with $k_n(c)\in\Pi^\even_n(c)$. Then $\Im
k_n(c)\to-\infty$ as $c\to1$.
\end{theorem}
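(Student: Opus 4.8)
The plan is to deduce the escape of the even resonances directly from the fact, recorded in \eqref{d13}, that the limiting function $F_\even(\cdot,1)$ has \emph{no} zeros whatsoever. By hypothesis $k_n=k_n(c)$ is the unique zero of $F_\even(\cdot,c)$ lying in $\Pi^\even_n(c)$, and Theorem~\ref{thm.f2}(i) shows that there is no zero on the vertical lines $\Re k=n\pm\tfrac12$; hence the real part is permanently confined to the fixed strip $\abs{\Re k_n(c)-n}<\tfrac12$. Thus the only way the family $k_n(c)$ can leave every compact set as $c\to1$ is by having $\Im k_n(c)\to-\infty$, and it is precisely this that I would establish.

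First I would fix an arbitrary $M>0$ and consider the compact box
\[
Q_M=\{\,k=x-iy:\ \abs{x-n}\leq\tfrac12,\ 0\leq y\leq M\,\}.
\]
On $Q_M$ the function $F_\even(k,c)$ depends on $c$ only through the single term $i\cos(kc\pi)$, so $F_\even(\cdot,c)\to F_\even(\cdot,1)$ uniformly on $Q_M$ as $c\to1$ (the cosine is uniformly continuous on the bounded set swept out by $kc\pi$). By \eqref{d13} we have $F_\even(k,1)=2i\rme^{-ik\pi}$, whence $\abs{F_\even(k,1)}=2\rme^{-y\pi}\geq 2\rme^{-M\pi}>0$ throughout $Q_M$. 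Combining the uniform convergence with this uniform lower bound, there is some $c_M<1$ such that $F_\even(k,c)\neq0$ for all $k\in Q_M$ and all $c\in(c_M,1)$.

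Consequently, for every $c\in(c_M,1)$ the unique zero $k_n(c)$ cannot lie in $Q_M$; since $\abs{\Re k_n(c)-n}<\tfrac12$ always holds, this forces $\Im k_n(c)<-M$. As $M>0$ was arbitrary, $\Im k_n(c)\to-\infty$ as $c\to1$, which is the assertion. (Equivalently, one could argue by contradiction: a bounded subsequence $k_n(c_j)$ with $c_j\to1$ would, by Bolzano--Weierstrass, converge to some finite $k_*$ with $F_\even(k_*,1)=0$, contradicting \eqref{d13}.)

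I do not anticipate a serious obstacle here. The whole mechanism is the non-vanishing of $F_\even$ at the singular value $c=1$ recorded in \eqref{d13}, which is exactly what drives the even resonances off to infinity as the two attachment points coalesce. The one feature that genuinely must be used is that the real parts of the even resonances stay bounded, furnished by Theorem~\ref{thm.f2}(i) together with the uniqueness of $k_n(c)$ in $\Pi^\even_n(c)$; this is what permits confining the analysis to the compact boxes $Q_M$ and passing to the uniform limit. Without such horizontal confinement one could not rule out the resonance drifting off sideways rather than downward, so the real-part bound, rather than any estimate on $\Im k_n(c)$ itself, is the delicate ingredient.
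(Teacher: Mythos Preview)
Your proof is correct and is essentially the same as the paper's. In fact, the paper uses precisely the contradiction argument you sketch parenthetically at the end (a bounded subsequence would converge by Bolzano--Weierstrass to a zero of $F_\even(\cdot,1)$, contradicting \eqref{d13}); your direct uniform-convergence formulation is an equivalent and equally clean way to package the same idea.
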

\begin{proof}
Suppose that the conclusion of the theorem is false. Then there
exists a sequence $c_m\to1$ such that $\Im k_n(c_m)$ is bounded. By
passing to a subsequence we can assume that $k_n(c_m)\to k_n^\infty
\in\C$ as $m\to \infty$. This would imply that
$F_\even(k_n^\infty,1)=0$ by the joint continuity of the function
$F_\even$. This is impossible by \eqref{d13}.
\end{proof}
Formal calculations and numerical analysis suggest that the rate of
divergence of $\Im k_n(c)$ as $c\to1$ is logarithmic. Thus, all even
resonances move off to infinity and this provides partial
explanation for the failure of the Weyl law for $c=1$.

As in the odd case, the even resonance curves intersect the real
axis for some rational values of $k$. A direct computation shows
that $F_\even(k,c)=0$ for $k\in\R$ if and only if
$$
k=m+n-1,\quad  c =\frac{m-n}{m+n-1}, \text{ for some $n,m\in\N$.}
$$

Figure~\ref{figure2} plots a typical even resonance curve as $c$ increases from $0$ to $1$. The curve starts at $4-i\log(3)/\pi$ when $c=0$ and passes through $4$ when $c=\frac{1}{5},\, \frac{3}{5}$ and diverges to $\infty$ as $c\to 1$.

\begin{figure}[h!]
\begin{center}
\hspace*{-2em}\includegraphics[width=6in]{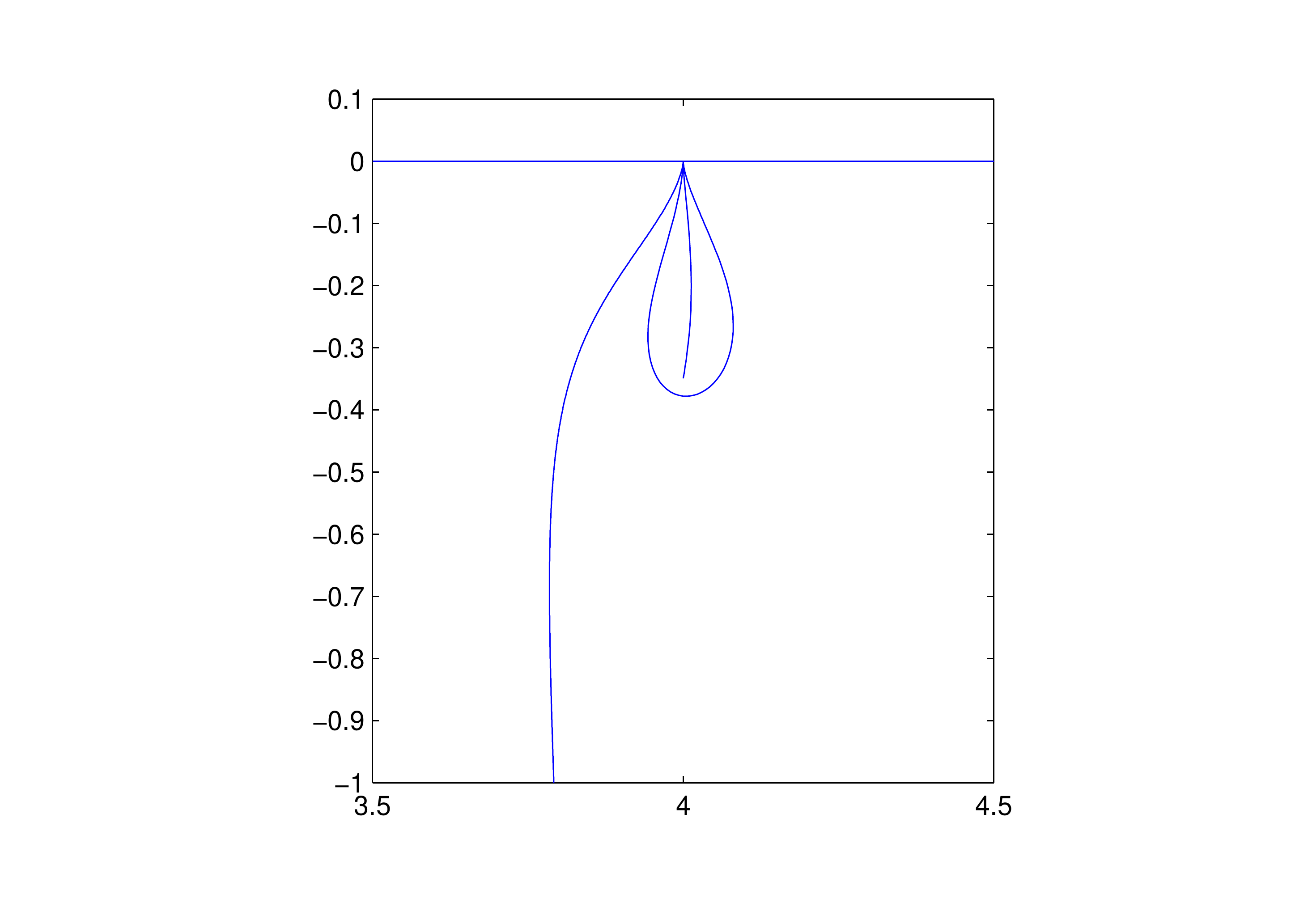}
\end{center}
\vspace*{-4ex} \caption{The even resonance curve in $\Pi^\even_4$}
\label{figure2}
\end{figure}


\begin{thebibliography}{99}


\bibitem{AC}
J.~Aguilar, J.-M.~Combes, \emph{A class of analytic perturbations
for one-body Schr\"odinger operators,} Commun. Math. Phys.
\textbf{22} (1971), 269--279.
\bibitem{borth}
D.~Borthwick, \emph{Sharp upper bounds for perturbations of
hyperbolic space,} preprint, arXiv:0910.2439.
\bibitem{Carron}
G.~Carron, \emph{D\' eterminant relatif et la fonction Xi,} American
J. of Math. \textbf{124} (2002) no. 2, 307--352.
\bibitem{chri}
T.~Christiansen, \emph{Asymptotics for a resonance-counting function
for potential scattering on cylinders,} J. Funct. Anal. \textbf{216}
(2004) 172--190.
\bibitem{davies}
E.~B.~Davies, \emph{Eigenvalues of an elliptic system,} Zeit. Math.
\textbf{243} (2003), 719--743.
\bibitem{davinc}
E.~B.~Davies and P.~ A.~Incani, \emph{Spectral properties of
matrices associated with directed graphs,} Proc. London Math. Soc.
\textbf{100}, no.~1 (2010), 55--90.
\bibitem{EL1}
P.~Exner, J.~Lipovsk\'y, \emph{Equivalence of resolvent and
scattering resonances on quantum graphs,} Adventures in Mathematical
Physics (Proceedings, Cergy-Pontoise 2006), \textbf{447},
Providence, R.I., (2007), 73--81.
\bibitem{EL2}
P.~Exner, J.~Lipovsk\'y, \emph{Resonances from perturbations of
quantum graphs with rationally related edges,} 
J. Phys. A \textbf{43} (2010) 105301.
\bibitem{froe}
R.~Froese, \emph{Asymptotic distribution of resonances in one
dimension,} J. Diff. Eqns. \textbf{137} (1997) 251--272.
\bibitem{gesztesy}
F.~Gesztesy, M.~Mitrea, M.~Zinchenko, \emph{On Dirichlet-to-Neumann
maps and some applications to modified Fredholm determinants,}
preprint, arXiv:1002.0389v2.
\bibitem{incani}
P.~A.~Incani, \emph{Spectral Theory of directed graphs,} PhD thesis,
King's College London, 2009.
\bibitem{Kor}
E.~Korotyaev, \emph{Inverse resonance scattering on the real line,}
Inverse Problems \textbf{21} (2005), 325--341.
\bibitem{Kostrykin1}
V.~Kostrykin and R.~Schrader , \emph{Kirchhoff's rule for quantum
wires,} J. Phys. A: Math. Gen. \textbf{32} (1999), 595--630.
\bibitem{Kostrykin2}
V.~Kostrykin and R.~Schrader , 
\emph{Laplacians on metric graphs: eigenvalues, resolvents and semigroups.} 
Quantum graphs and their applications, 201--225, 
Contemp. Math., \textbf{415}, Amer. Math. Soc., Providence, RI, 2006.
\bibitem{Kostrykin3}
V.~Kostrykin and R.~Schrader , 
\emph{Heat kernels on metric graphs and a trace formula.} 
Adventures in mathematical physics, 175--198, Contemp. Math., \textbf{447},
Amer. Math. Soc., Providence, RI, 2007.
\bibitem{kuch1}
P.~Kuchment, \emph{Quantum graphs: I. Some basic structures,} Waves
in Random Media \textbf{14} no. 1 (2004),  S107--S128.
\bibitem{kuch}
P.~Kuchment, \emph{Quantum graphs: an introduction and a brief
survey,} pp.291--314 in `Analysis on Graphs and its Applications',
Proc. Symp. Pure. Math., AMS 2008.
\bibitem{Kuchment2}
P.~Kuchment, \emph{Quantum graphs II,} J. Phys. A: Math. Gen.
\textbf{38} (2005), 4887--4900.
\bibitem{langer}
R.~E.~Langer, \emph{On the zeros of exponential sums and integrals,}
Bull. Amer. Math. Soc. \textbf{37} (1931), 213--239.
\bibitem{moreno}
C.~J.~Moreno, \emph{The zeros of exponential polynomials (I),} Comp.
Math. \textbf{26} (1973) 69--78.
\bibitem{Ong}
B.-S.~Ong, \emph{On the limiting absorption principle and spectra of
quantum graphs,} Contemp. Math., \textbf{415}, Amer. Math. Soc.,
Providence, RI, 2006.
\bibitem{parn}
L.~Parnovski, \emph{Spectral asymptotics of Laplace operators on
surfaces with cusps,} Math. Ann. \textbf{303} (1995) 281--296.
\bibitem{reedsimon}
M.~Reed, B.~Simon, \emph{Methods of Modern Mathematical Physics,}
vol. 4,  Acad. Press, 1978.
\bibitem{Simon}
B.~Simon, \emph{Resonances in n-body quantum systems with dilatation
analytic potentials and the foundations of time-dependent
perturbation theory}, Ann. of Math. \textbf{97} (1973), 247--274.
\bibitem{SZ}
J.~Sj\"ostrand, M.~Zworski, \emph{Complex scaling and the
distribution of scattering poles,} J. Amer. Math. Soc. \textbf{4}
(1991) 729--769.
\bibitem{stef}
P.~Stefanov, \emph{Sharp upper bounds on the number of the
scattering poles,} J. Funct. Anal. \textbf{231}(1) (2006), 111--142.
\bibitem{zwor}
M.~Zworski, \emph{Distribution of poles for scattering on the real
line,} J. Funct. Anal. \textbf{73} (1987) 277--296.
\end{thebibliography}
\end{document}